\theoremstyle{plain}
\newtheorem{thm}{Theorem}[section]
\newtheorem{theorem}[thm]{Theorem}
\newtheorem{lemma}[thm]{Lemma}
\newtheorem{proposition}[thm]{Proposition}
\theoremstyle{definition}
\newtheorem{definition}[thm]{Definition}
\newtheorem{remark}[thm]{Remark}
\title{The $\boldsymbol{p}$-adic duality for the finite star-multiple polylogarithms}
\author{Shin-ichiro Seki}
\address{Mathematical Institute, Tohoku University, 6-3, Aoba, Aramaki, Aoba-Ku, 
Sendai, 980-8578, Japan}
\email{shinichiro.seki.b3@tohoku.ac.jp}
\thanks{Partly supported by the Grant-in-Aid for JSPS Fellows (JP16J01758), The Ministry of Education, Culture, Sports, Science and Technology, Japan.}
\subjclass[2010]{Primary 11M32.}
\keywords{Finite multiple zeta values, finite multiple polylogarithms.}
\begin{document}
	
\maketitle
	
\begin{abstract}
We prove the $\boldsymbol{p}$-adic duality theorem for the finite star-multiple polylogarithms. That is a generalization of Hoffman's duality theorem for the finite multiple zeta-star values.
\end{abstract}
	\section{Introduction}
	\label{sec:Introduction}
	We begin with the duality for the finite multiple zeta(-star) values (FMZ(S)Vs) in Subsection \ref{subsec:Duality for FMZVs}. Next, we explain the duality for the finite star-multiple polylogarithms (FSMPs) in Subsection \ref{subsec:Duality for FSMPs}. Our main results are Theorem \ref{p-adic duality}, Theorem \ref{dual fn eq for 1-var}, and Theorem \ref{main cor}. The first two theorems are special cases of Theorem \ref{main cor}.
	\subsection{Duality for FMZVs}
	\label{subsec:Duality for FMZVs}
	For any positive integer $n$ and an index $\mathbf{k} = (k_1, \dots, k_m)$, we define the truncated multiple harmonic sums $\zeta_n(\mathbf{k})$ and $\zeta_n^{\star}(\mathbf{k})$ by 
	\begin{align*}
	&\zeta_n(\mathbf{k}):=\sum_{n \geq n_1> \cdots > n_m \geq 1}\frac{1}{n_1^{k_1}\cdots n_m^{k_m}}, \\
	&\zeta_n^{\star}(\mathbf{k}):=\sum_{n \geq n_1\geq \cdots \geq n_m \geq 1}\frac{1}{n_1^{k_1}\cdots n_m^{k_m}},
	\end{align*}
	respectively (we define $\zeta_n(\mathbf{k})$ as $0$ for an empty summation). Then, the multiple zeta value (MZV) $\zeta (\mathbf{k})$ and the multiple zeta-star value (MZSV) $\zeta^{\star}(\mathbf{k})$ are defined by $\zeta (\mathbf{k}):=\lim_{n \to \infty}\zeta_n(\mathbf{k})$ and $\zeta^{\star}(\mathbf{k}):=\lim_{n \to \infty}\zeta^{\star}_n(\mathbf{k})$, respectively when $k_1 \geq 2$. The duality theorem for MZVs $\zeta (\mathbf{k})=\zeta (\mathbf{k}')$ was conjectured firstly by Hoffman in \cite{Ho92} and proved by using the iterated integral (cf.~\cite{Za}). Dualities for MZSVs are not found except a few cases (cf.~\cite{KO}). 
	
	Recently, Kaneko and Zagier \cite{KZ} introduced the {\em finite} multiple zeta values (FMZVs) and several people are studying relations among FMZVs. The FMZV $\zeta^{}_{\mathcal{A}}(\mathbf{k})$ and the finite multiple zeta-star value (FMZSV) $\zeta_{\mathcal{A}}^{\star}(\mathbf{k})$ are defined by $\zeta^{}_{\mathcal{A}}(\mathbf{k}) := (\zeta_{p-1}(\mathbf{k})\bmod{p})_p$ and $\zeta_{\mathcal{A}}^{\star}(\mathbf{k}) := (\zeta_{p-1}^{\star}(\mathbf{k})\bmod{p})_p$ respectively in the $\mathbb{Q}$-algebra $\mathcal{A} = \left. \left( \prod_p\mathbb{Z}/p\mathbb{Z} \right) \right/ \left( \bigoplus_p\mathbb{Z}/p\mathbb{Z} \right)$, where $p$ runs over all prime numbers.
	Around 2000, the duality theorem for FMZSVs was discovered and proved by Hoffman \cite[Theorem 4.6]{Ho}:
	\begin{equation}
	\zeta_{\mathcal{A}}^{\star}(\mathbf{k}) = -\zeta_{\mathcal{A}}^{\star}(\mathbf{k}^{\vee})
	\label{Hoffman duality},\end{equation}
	where $\mathbf{k}^{\vee}$ is the Hoffman dual of the index $\mathbf{k}$ (See Definition \ref{Hoffman dual}). This is a counterpart to the duality theorem for MZVs. Comparing with the duality of MZVs, it is worth mentioning that such a simple duality is satisfied by FMZSVs rather than by non-star FMZVs. The duality (\ref{Hoffman duality}) is one of basic relations among FMZ(S)Vs and some other proofs are given by Imatomi \cite[Corollary 4.1]{I} and Yamamoto \cite[p.~3]{Y}. 
	
	In order to rewrite the duality (\ref{Hoffman duality}) to relations for non-star FMZVs, let us recall terminologies of Hoffman's algebra.
	Let $\mathfrak{H}:=\mathbb{Q}\langle x, y\rangle$ be a non-commutative polynomial algebra in two variables and $\mathfrak{H}^1:= \mathbb{Q}+\mathfrak{H}y$ its subalgebra. Let $z_k:=x^{k-1}y$ for a natural number $k$ and $z_{\mathbf{k}}:=z_{k_1}\cdots z_{k_m}$ for an index $\mathbf{k}=(k_1, \dots, k_m)$. Then, $\mathfrak{H}^1$ is generated by $z_k \ (k=1, 2, \dots)$ as a non-commutative algebra. We define the $\mathbb{Q}$-linear map $Z_{\mathcal{A}}\colon \mathfrak{H}^1 \to \mathcal{A}$ characterized by $Z_{\mathcal{A}}(1)=1$ and $Z_{\mathcal{A}}(z_{\mathbf{k}})=\zeta^{}_{\mathcal{A}}(\mathbf{k})$ for each index $\mathbf{k}$. We also define an algebra automorphism $\psi \colon \mathfrak{H}^1 \to \mathfrak{H}^1$ by $x\mapsto x+y$ and $y \mapsto -y$. In this setup, Hoffman proved that the duality theorem (\ref{Hoffman duality}) is equivalent to the following relations for FMZVs:
	\begin{theorem}[{\cite[Theorem 4.7]{Ho}}]
		For any word $w \in \mathfrak{H}^1$, we have
		\begin{equation*}
		\psi (w) - w \in \mathrm{ker}(Z_{\mathcal{A}}).
		\end{equation*}
		\label{equivalent theorem}\end{theorem}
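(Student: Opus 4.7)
The plan is to derive Theorem~\ref{equivalent theorem} from Hoffman's duality \eqref{Hoffman duality} by an algebraic calculation inside the Hoffman algebra $\mathfrak{H}$. The main idea is that the star operation and the Hoffman-dual operation are both realised by elementary substitutions on words, and that their composition, read in the right basis, is exactly the automorphism $\psi$ appearing in the theorem.

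First I would introduce a $\mathbb{Q}$-linear ``star'' operator $S\colon \mathfrak{H}^1 \to \mathfrak{H}^1$ by $S(uy) := \Phi(u)\, y$ for $u\in\mathfrak{H}$ and $S(1):=1$, where $\Phi$ is the algebra homomorphism of $\mathfrak{H}$ with $\Phi(x)=x$ and $\Phi(y)=x+y$. Expanding each factor $(x+y)$ in $\Phi(u)$ reproduces the familiar combinatorial identity expressing $\zeta_{\mathcal{A}}^{\star}(\mathbf{k})$ as the sum of $\zeta_{\mathcal{A}}(\mathbf{k}')$ over all coarsenings $\mathbf{k}'$ of $\mathbf{k}$; hence $Z_{\mathcal{A}}(S(z_{\mathbf{k}})) = \zeta_{\mathcal{A}}^{\star}(\mathbf{k})$ for every index $\mathbf{k}$. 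Since $\Phi$ is an algebra automorphism (inverse $x\mapsto x$, $y\mapsto y-x$), $S$ is a $\mathbb{Q}$-linear bijection on $\mathfrak{H}^1$.

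Next I would read off the Hoffman dual in the same language: if $\phi$ denotes the algebra automorphism of $\mathfrak{H}$ that swaps $x$ and $y$, the standard bijection between indices of weight $n$ and subsets of $\{1,\dots,n-1\}$ gives $z_{\mathbf{k}^{\vee}} = \phi(u)\, y$ whenever $z_{\mathbf{k}} = u y$. Consequently Hoffman's duality \eqref{Hoffman duality} is equivalent to the assertion
\[
\Phi(u)\, y + \Phi(\phi(u))\, y \,\in\, \ker Z_{\mathcal{A}}\qquad(\forall\,u\in\mathfrak{H}).
\]

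The heart of the argument is then the conjugation identity $\Phi\circ\phi\circ\Phi^{-1} = \psi$ as algebra automorphisms of $\mathfrak{H}$, which, by uniqueness, need only be checked on the two generators:
\begin{align*}
\Phi(\phi(\Phi^{-1}(x))) &= \Phi(\phi(x)) = \Phi(y) = x+y = \psi(x),\\
\Phi(\phi(\Phi^{-1}(y))) &= \Phi(\phi(y-x)) = \Phi(x-y) = -y = \psi(y).
\end{align*}
Substituting $u := \Phi^{-1}(w)$ for arbitrary $w\in\mathfrak{H}$ in the displayed Hoffman relation, and using this identity together with $\psi(y)=-y$ to rewrite $\Phi(\phi(\Phi^{-1}(w)))\, y = \psi(w)\, y = -\psi(wy)$, yields $wy - \psi(wy) \in \ker Z_{\mathcal{A}}$. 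As $w$ runs over $\mathfrak{H}$ this covers all spanning words of $\mathfrak{H}y$; the remaining case $w=1\in\mathfrak{H}^1$ is trivial since $\psi(1)=1$. Thus $\psi(w)-w\in\ker Z_{\mathcal{A}}$ for every $w\in\mathfrak{H}^1$.

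The main obstacle is essentially notational: one must carefully separate the ``internal'' letters of a word in $\mathfrak{H}^1$ (on which $\Phi$ and $\phi$ act) from the terminal $y$ (which is left alone by $S$ and $\phi$ but absorbs the sign coming from $\psi(y)=-y$). Once this bookkeeping is set up cleanly, the conjugation identity $\Phi\phi\Phi^{-1}=\psi$ does all the work.
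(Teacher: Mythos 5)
Your proof is correct: the paper itself gives no proof of this statement but cites Hoffman and notes precisely that it is equivalent to the duality \eqref{Hoffman duality}, and your derivation from \eqref{Hoffman duality} --- realizing the star sums by $S(uy)=\Phi(u)y$ with $\Phi(x)=x$, $\Phi(y)=x+y$, the Hoffman dual by $z_{\mathbf{k}^{\vee}}=\phi(u)y$ for $z_{\mathbf{k}}=uy$, and then using the conjugation identity $\Phi\circ\phi\circ\Phi^{-1}=\psi$ together with $\psi(w)y=-\psi(wy)$ --- is exactly the standard argument establishing that equivalence, with all steps checking out. One cosmetic caution: your $\Phi$ collides with the paper's use of $\Phi$ for the map $w\mapsto(1+y)\left(\frac{1}{1+y}\ast w\right)$ on $\widehat{\mathfrak{H}}^1$, so it should be renamed if this argument were inserted into the text.
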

	Next, we recall the $\mathbb{Q}$-algebra $\widehat{\mathcal{A}}$ introduced by Rosen \cite{Ro}. For any positive integer $n$, we define $\mathcal{A}_n$ to be the quotient ring $\left. \left( \prod_p\mathbb{Z}/p^n\mathbb{Z} \right) \right/ \left( \bigoplus_p \mathbb{Z}/p^n\mathbb{Z} \right)$. Then, the rings $\{\mathcal{A}_n\}$ becomes a projective system by natural projections and we define $\widehat{\mathcal{A}}$ to be the projective limit $\varprojlim_n \mathcal{A}_n$. We equip $\mathcal{A}_n$ with the discrete topology for each $n$ and $\widehat{\mathcal{A}}$ with the projective limit topology. The $\mathbb{Q}$-algebra $\widehat{\mathcal{A}}$ is complete and not locally compact. There exist natural projections $\pi \colon \widehat{\mathbb{Z}}=\prod_p\mathbb{Z}_p \twoheadrightarrow \widehat{\mathcal{A}}$ and $\pi_n\colon \widehat{\mathcal{A}} \twoheadrightarrow \mathcal{A}_n$ for any $n$, where $\mathbb{Z}_p$ is the ring of $p$-adic integers. We redefine the FMZV $\zeta_{\widehat{\mathcal{A}}}(\mathbf{k})$ and the FMZSV $\zeta_{\widehat{\mathcal{A}}}^{\star}(\mathbf{k})$ to be $\pi ((\zeta_{p-1}(\mathbf{k}))_p)$ and $\pi ((\zeta_{p-1}^{\star}(\mathbf{k}))_p)$, respectively as elements of $\widehat{\mathcal{A}}$. Furthermore, $\zeta^{}_{\mathcal{A}_n}(\mathbf{k}) := \pi_n(\zeta_{\widehat{\mathcal{A}}}(\mathbf{k}))$ and $\zeta_{\mathcal{A}_n}^{\star}(\mathbf{k}) := \pi_n(\zeta_{\widehat{\mathcal{A}}}^{\star}(\mathbf{k}))$ in $\mathcal{A}_n$. We define the element $\boldsymbol{p} := \pi((p)_p) \in \widehat{\mathcal{A}}$ and we also denote $\pi_n(\boldsymbol{p}) \in \mathcal{A}_n$ by $\boldsymbol{p}$ by abuse of notation. We can check that the topology of $\widehat{\mathcal{A}}$ is the $\boldsymbol{p}$-adic topology (See Subsection \ref{subsec:adelic ring}).
	
	Let $\widehat{\mathfrak{H}}^1$ be the completion of $\mathfrak{H}^1$. Namely, $\widehat{\mathfrak{H}}$ is defined as the non-commutative formal power series ring $\mathbb{Q}\langle \! \langle x, y \rangle \! \rangle$ and $\widehat{\mathfrak{H}}^1:=\mathbb{Q}+\widehat{\mathfrak{H}}y$. Then, the weighted finite multiple zeta function $Z_{\widehat{\mathcal{A}}}\colon \widehat{\mathfrak{H}}^1 \to \widehat{\mathcal{A}}$ is defined by
	\[
	\sum_{\mathbf{k}}a_{\mathbf{k}}z_{\mathbf{k}} \mapsto \sum_{\mathbf{k}}a_{\mathbf{k}}\zeta_{\widehat{\mathcal{A}}}(\mathbf{k})\boldsymbol{p}^{\mathrm{wt}(\mathbf{k})},
	\]
	where $a_{\mathbf{k}} \in \mathbb{Q}$ and $\mathrm{wt}(\mathbf{k})$ is the weight of the index $\mathbf{k}$. The algebra automorphism $\psi$ on $\mathfrak{H}^1$ is extended continuously to the map on $\widehat{\mathfrak{H}}^1$ and we define a continuous algebra automorphism $\Phi\colon \widehat{\mathfrak{H}}^1 \to \widehat{\mathfrak{H}}^1$ by
	\[
	w \mapsto (1+y)\left( \frac{1}{1+y}\ast w \right).
	\]
	Here, the harmonic product $\ast \colon \mathfrak{H}^1\times \mathfrak{H}^1 \to \mathfrak{H}^1$ is defined $\mathbb{Q}$-bilinearly and inductively by
	\begin{equation*}
	w\ast 1=1\ast w = w, \ z_kw_1\ast z_lw_2 = z_k(w_1\ast z_lw_2)+z_l(z_kw_1\ast w_2)+z_{k+l}(w_1\ast w_2)
	\end{equation*}
	for any positive integers $k, l$ and words $w, w_1, w_2 \in \mathfrak{H}^1$ and $\ast$ is extended naturally to the product on $\widehat{\mathfrak{H}}^1$. Rosen generalized Theorem \ref{equivalent theorem} as follows:
	\begin{theorem}[{Asymptotic duality theorem \cite[Theorem 4.5]{Ro}}]
		For any $w \in \widehat{\mathfrak{H}}^1$, we have
		\begin{equation*}
		\psi(w)-\Phi (w) \in \mathrm{ker}(Z_{\widehat{\mathcal{A}}}).
		\end{equation*}
	\end{theorem}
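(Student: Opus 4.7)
The overall strategy is to reduce the theorem to a concrete $\mathbf{p}$-adic identity for truncated harmonic sums and then match it against the algebraic structure of $\psi$ and $\Phi$. Since $Z_{\widehat{\mathcal{A}}}$, $\psi$, and $\Phi$ are continuous and $\mathbb{Q}$-linear on $\widehat{\mathfrak{H}}^1$, and since the weight grading corresponds to the $\mathbf{p}$-adic filtration on $\widehat{\mathcal{A}}$ (weight $\geq N$ terms land in $\mathbf{p}^N\widehat{\mathcal{A}}$), I reduce to verifying $Z_{\widehat{\mathcal{A}}}(\psi(z_{\mathbf{k}})) = Z_{\widehat{\mathcal{A}}}(\Phi(z_{\mathbf{k}}))$ for each individual index $\mathbf{k}$, working modulo $\mathbf{p}^N$ for each $N$; this is a finite congruence in $\mathbb{Z}/p^N\mathbb{Z}$ for every prime $p$.

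The main computational input is a \emph{$\mathbf{p}$-adic reversal-of-summation identity}. Starting from $\zeta_{p-1}(\mathbf{k}) = \sum_{p-1 \geq n_1 > \cdots > n_m \geq 1} n_1^{-k_1}\cdots n_m^{-k_m}$, substitute $n_i \mapsto p - n_{m+1-i}$, expand each $(p-n)^{-k}$ as the $p$-adically convergent binomial series $(-1)^k n^{-k}\sum_{j\geq 0}\binom{k+j-1}{j}(p/n)^j$, and regroup by total $p$-weight. The result is a $\mathbb{Z}_p$-identity expressing $\zeta_{p-1}(\mathbf{k})$ as an absolutely convergent sum of star truncated sums $\zeta^{\star}_{p-1}(\mathbf{l})$ with explicit signed binomial coefficients times explicit powers of $p$. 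Passing to the inverse limit, this identity lifts to an equality in $\widehat{\mathcal{A}}$ of the form $\zeta_{\widehat{\mathcal{A}}}(\mathbf{k}) = Z^{\star}_{\widehat{\mathcal{A}}}(R_{\mathbf{k}})$ for an explicit continuous operator $R$ on $\widehat{\mathfrak{H}}^1$.

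Two combinatorial identifications in $\widehat{\mathfrak{H}}^1$ then finish the argument. First, the binomial coefficients produced by the reversal are exactly those obtained when $\psi(z_{\mathbf{k}}) = \psi(x)^{k_1-1}\psi(y)\cdots\psi(x)^{k_m-1}\psi(y)$ is expanded in the $z$-basis, together with a Hoffman-dual re-indexing; this is the algebraic content underlying Hoffman's equivalence (Theorem \ref{equivalent theorem}), and it should upgrade term-by-term to the weighted setting, producing a closed form for $Z_{\widehat{\mathcal{A}}}(\psi(z_{\mathbf{k}}))$. Second, the harmonic-product recursion $z_k \ast z_l = z_k z_l + z_l z_k + z_{k+l}$ shows that the twisted product $(1+y)\bigl(\tfrac{1}{1+y}\ast w\bigr)$ is precisely designed to convert strict-inequality truncated sums into non-strict (star) ones while producing $\mathbf{p}$-adic corrections from the diagonal terms $z_{k+l}$; this yields a parallel closed form for $Z_{\widehat{\mathcal{A}}}(\Phi(z_{\mathbf{k}}))$, and the two closed forms coincide by the reversal identity.

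\textbf{Main obstacle.} The delicate part is the second identification: one must verify that the particular shape of $\Phi$---the prefactor $(1+y)$ together with the infinite harmonic product with $\tfrac{1}{1+y}$---simultaneously matches the star-vs-non-star conversion and the $\mathbf{p}$-adic weight expansion produced by the reversal, with correct signs and correct boundary behaviour at the leftmost and rightmost entries of $\mathbf{k}$. Convergence of $\tfrac{1}{1+y}\ast w$ in $\widehat{\mathfrak{H}}^1$ is immediate from the weight grading, so the real obstacle is bookkeeping of signs and binomial coefficients rather than any analytic subtlety.
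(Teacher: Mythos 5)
First, a point of order: the paper does not prove this statement at all --- it is quoted verbatim as Rosen's asymptotic duality theorem \cite[Theorem 4.5]{Ro}, so there is no in-paper proof to compare against. (The paper's own contribution, Theorem \ref{p-adic duality}, is a different $\mathbf{p}$-adic duality, formulated for star values with $(\{1\}^i,\mathbf{k})$ prefixes and proved by an entirely different route through finite multiple polylogarithms.) Judged on its own terms, your proposal is an outline that assembles the right raw materials --- the reduction to single words via the weight filtration, the $n_i\mapsto p-n_{m+1-i}$ reversal with the binomial expansion of $(p-n)^{-k}$, and the harmonic-product structure of $\Phi$ --- but it does not contain the proof. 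The step you label the ``main obstacle'' and defer as ``bookkeeping'' is not bookkeeping: verifying that $(1+y)\bigl(\tfrac{1}{1+y}\ast w\bigr)$ reproduces \emph{exactly} the correction terms generated by the reversal expansion is the entire content of the theorem, and nothing in the plan indicates how that verification would go.

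There is also a concrete misattribution in the first ``combinatorial identification.'' Expanding $\psi(z_k)=(x+y)^{k-1}(-y)$ in the $z$-basis gives $-\sum_{\mathrm{wt}(\mathbf{l})=k}z_{\mathbf{l}}$: all coefficients are $\pm1$ and the expansion is weight-homogeneous. The genuine binomial coefficients $\binom{k_j+l_j-1}{l_j}$ produced by the reversal are attached to strictly positive powers of $\mathbf{p}$ and strictly increase the weight, so they cannot be ``exactly those obtained when $\psi(z_{\mathbf{k}})$ is expanded''; they must instead be absorbed by the $\Phi$ side (in Hoffman's and Rosen's arguments they enter through the expansion of $(-1)^n\binom{p-1}{n}$ as a product $\prod_{m}(1-p/m)^{\pm1}$, which is what generates the star-type sums with extra $1$'s --- compare Lemma \ref{lemma by the referee} of this paper, not the reversal substitution). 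Similarly, the reversal identity produces non-star sums over augmented indices $\mathbf{k}\oplus\mathbf{l}$, not star sums $\zeta^{\star}_{p-1}(\mathbf{l})$ as you assert. Until the two closed forms for $Z_{\widehat{\mathcal{A}}}(\psi(z_{\mathbf{k}}))$ and $Z_{\widehat{\mathcal{A}}}(\Phi(z_{\mathbf{k}}))$ are actually written down and matched --- signs, binomial coefficients, and the boundary behaviour of the prefactor $(1+y)$ included --- this remains a plausible strategy sketch rather than a proof.
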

	On the other hand, Zhao, Sakugawa, and the author proved the straightforward generalization of the duality (\ref{Hoffman duality}) to an $\mathcal{A}_2$-relation (\cite[Theorem 2.11]{Z} and \cite[The equality (40)]{SS}). We can rewrite the relation as the following symmetric form:
	\begin{equation*}
	\zeta_{\mathcal{A}_2}^{\star}(\mathbf{k})+\zeta_{\mathcal{A}_2}^{\star}(1, \mathbf{k})\boldsymbol{p} = -\zeta_{\mathcal{A}_2}^{\star}(\mathbf{k}^{\vee})-\zeta_{\mathcal{A}_2}^{\star}(1, \mathbf{k}^{\vee})\boldsymbol{p}.
	\label{A_2-duality}\end{equation*}
	In this paper, we give the following $\boldsymbol{p}$-adic version of the duality (\ref{Hoffman duality}):
	\begin{theorem}[The $\boldsymbol{p}$-adic duality theorem for FMZSVs]
		Let $\mathbf{k}$ be an index. Then, we have
		\begin{equation*}
		\sum_{i=0}^{\infty}\zeta_{\widehat{\mathcal{A}}}^{\star}(\{1\}^i, \mathbf{k})\boldsymbol{p}^i = - \sum_{i=0}^{\infty}\zeta_{\widehat{\mathcal{A}}}^{\star}(\{1\}^i, \mathbf{k}^{\vee})\boldsymbol{p}^i
		\end{equation*}
		in the ring $\widehat{\mathcal{A}}$.
		\label{p-adic duality}\end{theorem}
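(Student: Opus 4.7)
My plan is to deduce Theorem~\ref{p-adic duality} from Rosen's asymptotic duality theorem together with Hoffman's mod-$\mathbf{p}$ duality, after translating both sides into identities in the completed Hoffman algebra $\widehat{\mathfrak{H}}^{1}$. A preliminary observation that I exploit throughout is that $\mathbf{p}$ is \emph{not} a zero-divisor in $\widehat{\mathcal{A}}$: if $\mathbf{p}x=0$ for $x=(x_{n})\in\widehat{\mathcal{A}}$, then $x_{n}$ lies in $\mathrm{Ann}_{\mathcal{A}_{n}}(\mathbf{p})=\mathbf{p}^{n-1}\mathcal{A}_{n}$, which is the image under the projection $\mathcal{A}_{n+1}\twoheadrightarrow\mathcal{A}_{n}$ of an element of $\mathbf{p}^{n}\mathcal{A}_{n+1}$; since $\mathbf{p}^{n}=0$ in $\mathcal{A}_{n}$, we conclude $x_{n}=0$ for all $n$. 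This will allow me to strip off a common factor of $\mathbf{p}^{\mathrm{wt}(\mathbf{k})}$ at the end.

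Let $R\colon\widehat{\mathfrak{H}}^{1}\to\widehat{\mathfrak{H}}^{1}$ denote the continuous $\mathbb{Q}$-linear refinement operator defined on generators by $z_{\mathbf{l}}\mapsto\sum_{T\subseteq[|\mathbf{l}|-1]}z_{\mathbf{l}_{T}}$, where $\mathbf{l}_{T}$ is obtained from $\mathbf{l}$ by combining consecutive entries at the positions in $T$; the elementary refinement formula $\zeta_{n}^{\star}(\mathbf{l})=\sum_{T}\zeta_{n}(\mathbf{l}_{T})$ says that $Z_{\widehat{\mathcal{A}}}\circ R$ is the weighted star zeta evaluation. Setting $W_{\mathbf{k}}:=\frac{1}{1-y}z_{\mathbf{k}}=\sum_{i\ge 0}z_{1}^{i}z_{\mathbf{k}}\in\widehat{\mathfrak{H}}^{1}$, a direct computation gives
\[
Z_{\widehat{\mathcal{A}}}(R(W_{\mathbf{k}}))\;=\;\mathbf{p}^{\mathrm{wt}(\mathbf{k})}\sum_{i\ge 0}\zeta_{\widehat{\mathcal{A}}}^{\star}(\{1\}^{i},\mathbf{k})\mathbf{p}^{i},
\]
and the analogous formula for $\mathbf{k}^{\vee}$ shares the same prefactor since Hoffman duality preserves weight. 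By the preliminary observation, Theorem~\ref{p-adic duality} is therefore equivalent to the assertion
\begin{equation*}
R(W_{\mathbf{k}})+R(W_{\mathbf{k}^{\vee}})\;\in\;\ker Z_{\widehat{\mathcal{A}}}.\tag{$\ast$}
\end{equation*}

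To prove $(\ast)$, I apply Rosen's theorem to $w=W_{\mathbf{k}}$. Since $\psi(y)=-y$, we have $\psi(W_{\mathbf{k}})=\frac{1}{1+y}\psi(z_{\mathbf{k}})$, while $\Phi(W_{\mathbf{k}})=(1+y)(\frac{1}{1+y}\ast W_{\mathbf{k}})$. The mod-$\mathbf{p}$ reduction of Rosen's congruence, combined with the Hoffman-level identity $Z_{\mathcal{A}}(\psi(z_{\mathbf{k}}))=-\zeta_{\mathcal{A}}^{\star}(\mathbf{k}^{\vee})$ (a reformulation of~(\ref{Hoffman duality}) equivalent to Theorem~\ref{equivalent theorem}), is precisely $(\ast)$ modulo $\mathbf{p}$; Rosen's full $\widehat{\mathcal{A}}$-statement should then upgrade this to the required $\mathbf{p}$-adic congruence once the harmonic-product term $\frac{1}{1+y}\ast W_{\mathbf{k}}$ is expanded and its image compared with $R(W_{\mathbf{k}^{\vee}})$ via the Hoffman-dual bijection on words. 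The main obstacle is therefore the word-level combinatorial identity in $\widehat{\mathfrak{H}}^{1}$ asserting that $(1+y)(\frac{1}{1+y}\ast W_{\mathbf{k}})$ and $-R(W_{\mathbf{k}^{\vee}})$ are congruent modulo $\ker Z_{\widehat{\mathcal{A}}}$: this requires a careful analysis of the interplay between the harmonic product, the refinement operator $R$, and the Hoffman dual, analogous to the intertwining that Rosen exploits in his proof of the asymptotic duality.
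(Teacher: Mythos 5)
Your reduction of the theorem to the assertion $(\ast)$ is sound: $\mathbf{p}$ is indeed a non-zero-divisor in $\widehat{\mathcal{A}}$ (the annihilator computation works once one notes that $x_{n}$ is the image of $x_{n+1}\in\mathbf{p}^{n}\mathcal{A}_{n+1}$ and that $\mathbf{p}^{n}\mathcal{A}_{n}=0$ by Lemma \ref{I-adic}), and the identity $Z_{\widehat{\mathcal{A}}}(R(z_{\mathbf{l}}))=\zeta^{\star}_{\widehat{\mathcal{A}}}(\mathbf{l})\mathbf{p}^{\mathrm{wt}(\mathbf{l})}$ does give your formula for $Z_{\widehat{\mathcal{A}}}(R(W_{\mathbf{k}}))$, with a common prefactor $\mathbf{p}^{\mathrm{wt}(\mathbf{k})}$ on both sides because the Hoffman dual preserves weight. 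The problem is that $(\ast)$ itself --- the entire content of the theorem --- is never proved. You defer it to a ``word-level combinatorial identity'' asserting $(1+y)\bigl(\frac{1}{1+y}\ast W_{\mathbf{k}}\bigr)\equiv -R(W_{\mathbf{k}^{\vee}})$ modulo $\ker Z_{\widehat{\mathcal{A}}}$, and you would also need the companion congruence $\psi(W_{\mathbf{k}})\equiv R(W_{\mathbf{k}})$ to extract $(\ast)$ from Rosen's $\psi(W_{\mathbf{k}})\equiv\Phi(W_{\mathbf{k}})$; neither is established, and your own phrasing (``should then upgrade'', ``the main obstacle is'') concedes this. Moreover, the natural computation points away from the target: the exact word identity underlying Theorem \ref{equivalent theorem} is $\psi(R(z_{\mathbf{l}}))=-R(z_{\mathbf{l}^{\vee}})$, and since $(\{1\}^{i},\mathbf{k})^{\vee}=(k^{\vee}_{1}+i,k^{\vee}_{2},\dots,k^{\vee}_{m'})$ rather than $(\{1\}^{i},\mathbf{k}^{\vee})$, feeding $R(W_{\mathbf{k}})$ into Rosen's theorem produces the series $\sum_{i}\zeta^{\star}_{\widehat{\mathcal{A}}}(k^{\vee}_{1}+i,k^{\vee}_{2},\dots)\mathbf{p}^{i}$ on the dual side, which is a different generating series from the $\sum_{i}\zeta^{\star}_{\widehat{\mathcal{A}}}(\{1\}^{i},\mathbf{k}^{\vee})\mathbf{p}^{i}$ of the statement. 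So it is not merely that a verification was omitted: it is unclear that the asymptotic duality theorem specializes to the present statement without substantial new input.

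For comparison, the paper does not route through Rosen's theorem at all. It proves the stronger one-variable polylogarithm identity of Theorem \ref{dual fn eq for 1-var} (whose specialization $t=1$ is exactly the present theorem) by elementary means: the $p$-adic expansion of $(-1)^{n}\binom{p-1}{n}$ in Lemma \ref{lemma by the referee}, the symmetric polynomial identity of Proposition \ref{symmetric identity} imported from \cite{SS}, and an induction on $n$ in $\mathcal{A}_{n,\mathbb{Z}[\mathbf{t}]}$. To salvage your approach you would have to actually prove the two word-level congruences above (or locate the correct element of $\widehat{\mathfrak{H}}^{1}$ to which Rosen's theorem should be applied); as written, the argument reformulates the problem rather than solving it.
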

	\noindent Here, the notation $(\{1\}^i, \mathbf{k})$ means $(\underbrace{1, \dots, 1}_i, k_1, \dots, k_m)$ for $\mathbf{k}=(k_1, \dots, k_m)$. We remark that if we take the $i=0$ part of the equality in the above theorem, then we recover the equality (\ref{Hoffman duality}).
	\subsection{Duality for FSMPs}
	\label{subsec:Duality for FSMPs}
	In \cite{SS}, Sakugawa and the author introduced the finite (star-) multiple polylogarithms (F(S)MPs) and proved the following dual functional equation of FSMPs which is a generalization of Hoffman's duality theorem (\ref{Hoffman duality}):
	\begin{theorem}[{Sakugawa-Seki \cite[Theorem 1.3]{SS}}]
		Let $\mathbf{k}$ be an index. Then, we have
		\[
		\widetilde{\text{\rm \pounds}}_{\mathcal{A}, \mathbf{k}}^{\star}(t)-\frac{1}{2}\zeta_{\mathcal{A}}^{\star}(\mathbf{k}) = \widetilde{\text{\rm \pounds}}_{\mathcal{A}, \mathbf{k}^{\vee}}^{\star}(1-t)-\frac{1}{2}\zeta_{\mathcal{A}}^{\star}(\mathbf{k}^{\vee})
		\]
		in the ring $\mathcal{A}_{\mathbb{Z}[t]} = \left. \left( \prod_p \mathbb{Z}/p\mathbb{Z}[t] \right) \right/ \left( \bigoplus_p \mathbb{Z}/p\mathbb{Z}[t] \right)$.
		\label{SS thm}\end{theorem}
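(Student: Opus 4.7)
The plan is to reduce the functional equation to a congruence modulo each prime $p$ between polynomials in $\mathbb{Z}/p\mathbb{Z}[t]$, and to establish that congruence by a combinatorial transformation of the defining iterated sum of $\widetilde{\text{\rm \pounds}}_{\mathcal{A},\mathbf{k}}^{\star}(t)$. The strategy parallels Hoffman's proof of (\ref{Hoffman duality}): the Hoffman dual $\mathbf{k}^{\vee}$ emerges from reorganizing a truncated multiple sum according to the inequality pattern on $\{1,\dots,p-1\}$, and here the extra $t$-variable contributes a polynomial weight that has to be tracked through the same reorganization.

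First, for each prime $p$ I would unfold $\widetilde{\text{\rm \pounds}}_{\mathcal{A},\mathbf{k}}^{\star}(t)$ componentwise as an explicit polynomial in $t$ whose coefficients are truncated harmonic sums over the simplex $p-1 \geq n_1 \geq \cdots \geq n_m \geq 1$, weighted by powers of $t$ (and possibly of $1-t$) according to the tilde normalization. Next, I would apply the involution $n_i \mapsto p - n_i$, which modulo $p$ reverses the inequality chain and, via $(p-n_i)^{-k_i} \equiv (-1)^{k_i} n_i^{-k_i} \pmod p$, matches the sign conventions of the dual side. Reorganizing the resulting sum by the pattern of strict versus weak inequalities---the combinatorial step at the heart of Hoffman's argument, where the bijection between such patterns and pairs of Hoffman dual indices produces the duality---converts the index from $\mathbf{k}$ to $\mathbf{k}^{\vee}$. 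Combined with a $t \leftrightarrow 1-t$ expansion (for instance through the binomial basis $\sum_j \binom{n}{j}(t-1)^j$) this turns the left-hand polynomial in $t$ indexed by $\mathbf{k}$ into the right-hand polynomial in $1-t$ indexed by $\mathbf{k}^{\vee}$. The $t$-independent parts collect into $\zeta_{\mathcal{A}}^{\star}(\mathbf{k})$ and $\zeta_{\mathcal{A}}^{\star}(\mathbf{k}^{\vee})$; the symmetric $\frac{1}{2}$-allocation of these is forced by the requirement that specialization at $t = 0$ or $t = 1$ reproduce (\ref{Hoffman duality}), which serves as a consistency check on the normalization.

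The main obstacle will be to handle two layers of combinatorics at once: the inequality-pattern reorganization producing the Hoffman dual, and the polynomial bookkeeping translating $t$-powers into $(1-t)$-powers. In the pure FMZSV case (\ref{Hoffman duality}) only the former appears; here one must make the two procedures intertwine cleanly, which I expect to arrange by choosing a reflection-friendly polynomial basis so that the $t$-weight attached to each inequality class transforms predictably under $n \mapsto p - n$. Verifying that the boundary contributions---terms arising from degenerations of the inequality pattern or from extremal values $n_1 \in \{1, p-1\}$---assemble into precisely the half-FMZSV corrections on both sides is the most delicate part of the argument.
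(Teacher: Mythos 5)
There is a genuine gap: the mechanism you propose for producing the Hoffman dual is the wrong one. The involution $n_i \mapsto p-n_i$ on the summation simplex is precisely what yields the \emph{reversal} relation (Subsection \ref{subsec:Asymptotic reversal theorems} of the paper): it reverses the order of the index and introduces the sign $(-1)^{\mathrm{wt}(\mathbf{k})}$, but it preserves the depth, and therefore cannot convert $\mathbf{k}$ into $\mathbf{k}^{\vee}$, whose depth is $\mathrm{wt}(\mathbf{k})+1-\mathrm{dep}(\mathbf{k})$. Concretely, for $\mathbf{k}=(3)$ the Hoffman dual is $(1,1,1)$, and no reindexing of the single sum $\sum_{n} t^{n}/n^{3}$ by $n\mapsto p-n$ followed by a reorganization of strict versus weak inequalities will turn it into a triple sum: reorganizing by inequality patterns never changes the number of summation variables. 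Moreover, the sign $(-1)^{\mathrm{wt}(\mathbf{k})}$ coming from $(p-n)^{-k}\equiv(-1)^{k}n^{-k}\pmod p$ depends on the parity of the weight, whereas the sign in Hoffman's duality is $-1$ unconditionally (visible by setting $t=1$ in the statement), so the involution cannot be the source of the minus sign either.

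What actually drives the proof (in \cite{SS}, and in this paper, where the statement is recovered as the $i=0$, $r=1$ specialization of Theorem \ref{main cor}) is a combination of two ingredients absent from your sketch. First, a general index is encoded as a specialization of the depth-$w$ all-ones shuffle star-polylogarithm: by Lemma \ref{general index lemma}, the polylogarithm attached to $\mathbf{k}$ equals the one attached to $\{1\}^{w}$ evaluated at a tuple of $0$'s and $1$'s recording the word $w(\mathbf{k})$; since $\mathbf{t}\mapsto 1-\mathbf{t}$ swaps $0$ and $1$, it implements exactly the letter exchange $\tau$ defining $\mathbf{k}^{\vee}$, and this is where the depth changes. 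Second, the $\mathbf{t}\mapsto 1-\mathbf{t}$ symmetry itself is carried by the polynomial identity \cite[Theorem 2.5]{SS} (Proposition \ref{symmetric identity} here), whose left-hand sum carries the weight $(-1)^{n_1}\binom{N}{n_1}$; the point is that for $N=p-1$ this weight is $\equiv 1\pmod p$, so the weighted and unweighted sums agree in $\mathcal{A}_{\mathbb{Z}[\mathbf{t}]}$ and the identity becomes the duality. The $\tfrac12$-normalization then arises from subtracting half of the $t_w=1$ specialization of that identity, not from a consistency check at $t=0,1$. Your mention of a binomial expansion in $t-1$ gestures toward this identity, but without isolating it, and without the $\{0,1\}$-specialization step that changes the depth, the argument does not close.
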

	\noindent See Definition \ref{def of FMP} for the definition of FMPs.
	
	In this paper, we prove the following $\boldsymbol{p}$-adic version of Theorem \ref{SS thm} which contains Theorem \ref{p-adic duality} as a special case:
	\begin{theorem}
		Let $\mathbf{k}$ be an index. Then, we have
		{\small \begin{equation}
			\sum_{i=0}^{\infty}\left( \widetilde{\text{\rm \pounds}}_{\widehat{\mathcal{A}}, (\{1\}^i, \mathbf{k})}^{\star}(t)-\frac{1}{2}\zeta_{\widehat{\mathcal{A}}}^{\star}(\{1\}^i, \mathbf{k})\right) \boldsymbol{p}^i = \sum_{i=0}^{\infty}\left( \widetilde{\text{\rm \pounds}}_{\widehat{\mathcal{A}}, (\{1\}^i, \mathbf{k}^{\vee})}^{\star}(1-t)-\frac{1}{2}\zeta_{\widehat{\mathcal{A}}}^{\star}(\{1\}^i, \mathbf{k}^{\vee})\right) \boldsymbol{p}^i.
			\label{1-var}\end{equation}
		}in the ring $\widehat{\mathcal{A}}_{\mathbb{Z}[t]} = \varprojlim_n \left. \left( \prod_p \mathbb{Z}/p^n\mathbb{Z}[t] \right) \right/ \left( \bigoplus_p\mathbb{Z}/p^n\mathbb{Z}[t] \right)$.
		\label{dual fn eq for 1-var}\end{theorem}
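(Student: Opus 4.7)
The strategy is to verify (\ref{1-var}) componentwise in the projective limit defining $\widehat{\mathcal{A}}_{\mathbb{Z}[t]}$, and to prove the resulting mod-$p^{n}$ congruences by a $\mathbf{p}$-adic refinement of the proof of Theorem \ref{SS thm} in \cite{SS}.

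\emph{Step 1 (reduction to finite levels).} Project (\ref{1-var}) to the $n$-th layer $(\prod_{p}\mathbb{Z}/p^{n}\mathbb{Z}[t])/(\bigoplus_{p}\mathbb{Z}/p^{n}\mathbb{Z}[t])$. Since $\mathbf{p}^{n}=0$ in that layer, only the summands with $i<n$ survive, and it therefore suffices to prove, for every $n\geq 1$ and almost all primes $p$, the congruence in $\mathbb{Z}/p^{n}\mathbb{Z}[t]$ obtained by truncating both infinite sums at $i=n-1$, replacing each $\widetilde{\text{\rm \pounds}}_{\widehat{\mathcal{A}},\mathbf{l}}^{\star}(t)$ by its defining truncated representative at the prime $p$, and replacing $\mathbf{p}$ by $p$.

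\emph{Step 2 (lifting the reversal).} Theorem \ref{SS thm} is proved in \cite{SS} by the change of variable $n_{j}\mapsto p-n_{j}$ in the truncated defining sum of $\widetilde{\text{\rm \pounds}}^{\star}_{p-1,\mathbf{k}}(t)$, combined with the mod-$p$ identity $(p-n_{j})^{-k_{j}}\equiv(-n_{j})^{-k_{j}}\pmod{p}$ and an analogous polynomial identity converting the $t^{n_{1}}$-type factor into its $(1-t)^{n_{1}}$-type counterpart. To upgrade this argument to mod $p^{n}$, I would retain the full binomial expansion
\[
(p-n_{j})^{-k_{j}}=(-n_{j})^{-k_{j}}\sum_{a\geq 0}\binom{-k_{j}}{a}\Bigl(-\frac{p}{n_{j}}\Bigr)^{a},
\]
truncated at $a\leq n-1$, together with the parallel higher-order lift of the polynomial identity in $t$. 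Each retained term of order $p^{a_{j}}$ at position $j$ raises the exponent of $n_{j}$ by $a_{j}$, and under the star ordering $n_{1}\geq\cdots\geq n_{m}\geq 1$ this is combinatorially equivalent to inserting $a_{j}$ extra copies of $1$ in the multi-index immediately to the left of the $j$-th entry.

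\emph{Step 3 (assembly and symmetrisation).} Group the contributions by total $\mathbf{p}$-order $i:=a_{1}+\cdots+a_{m}$. The $i$-th graded piece reorganises into $\widetilde{\text{\rm \pounds}}^{\star}_{p-1,(\{1\}^{i},\mathbf{k}^{\vee})}(1-t)\,p^{i}\pmod{p^{n}}$, and summing over $0\leq i<n$ reproduces the right-hand side of the target congruence. The constant $-\tfrac{1}{2}\zeta^{\star}$ on each side arises, as in \cite{SS}, from symmetrising the diagonal contributions of pairs $(n_{j},p-n_{j})$; specialising the resulting identity to $t\in\{0,1\}$ collapses $\widetilde{\text{\rm \pounds}}^{\star}(t)$ to $\zeta^{\star}$ and recovers Theorem \ref{p-adic duality}, providing a consistency check.

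\emph{Main obstacle.} The delicate point is Step 2: one must verify rigorously that the higher-order term $\binom{-k_{j}}{a_{j}}p^{a_{j}}/n_{j}^{k_{j}+a_{j}}$ at position $j$ corresponds precisely to inserting $a_{j}$ copies of $1$ at the correct location in the multi-index, with no spurious cross-terms between different positions. This is the polylogarithmic analogue of the combinatorial identity underlying Rosen's map $\Phi$, and one natural way to streamline the bookkeeping is to lift the argument to the harmonic-product algebra $\widehat{\mathfrak{H}}^{1}$ and work with generating series in $\widehat{\mathfrak{H}}^{1}[[\mathbf{p}]]$, thereby adapting Rosen's framework to the polylogarithm setting.
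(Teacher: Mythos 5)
There is a genuine gap, and it lies in Step 2: the mechanism you describe is the one that proves the \emph{reversal} relation, not the \emph{duality}. The substitution $n_j\mapsto p-n_j$ together with the expansion $(p-n_j)^{-k_j}=(-1)^{k_j}\sum_{a\ge 0}\binom{k_j+a-1}{a}p^a n_j^{-(k_j+a)}$ is exactly the computation in the paper's $\mathbf{p}$-adic reversal theorem (Section \ref{subsec:Asymptotic reversal theorems}); it produces the reversed index with entries shifted by $\mathbf{k}\oplus\mathbf{l}$ and the variables replaced by $\overline{\mathbf{t}^{-1}}$. It cannot produce the Hoffman dual $\mathbf{k}^{\vee}$ nor the substitution $t\mapsto 1-t$. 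Moreover your bridging claim that a term $p^{a_j}/n_j^{k_j+a_j}$ is ``combinatorially equivalent to inserting $a_j$ copies of $1$ immediately to the left of the $j$-th entry'' is false termwise: $\sum_n n^{-(k+a)}$ and $\sum_{m_1\ge\cdots\ge m_a\ge n} (m_1\cdots m_a)^{-1}n^{-k}$ are different sums, so Step 3 does not follow from Step 2. The actual duality input in the paper is entirely different: the exact polynomial identity of Proposition \ref{symmetric identity} (from \cite[Theorem 2.5]{SS}), in which the weight $(-1)^{n_1}\binom{N}{n_1}$ on one side is traded for the substitution $\mathbf{t}\mapsto 1-\mathbf{t}$ on the other, combined with the $p$-adic expansion of $(-1)^{n_1}\binom{p-1}{n_1}$ of Lemma \ref{lemma by the referee}; it is the nested sum $\sum_{p-1\ge m_1\ge\cdots\ge m_i\ge n_1}p^i/(m_1\cdots m_i)$ in that lemma --- not a binomial expansion of a single reciprocal --- that genuinely prepends the blocks $\{1\}^i$.

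Two further ingredients of the paper's argument are absent from your plan and are not cosmetic. First, the theorem is deduced as the $r=1$ case of Theorem \ref{main cor}, which is itself reduced to the all-ones index $\{1\}^w$ in $w$ independent variables (Theorem \ref{main thm}) via Lemma \ref{general index lemma}; the Hoffman dual then enters purely combinatorially, by exchanging the $0$'s and $1$'s substituted into the variable slots under $t\mapsto 1-t$. Without passing to the multi-variable all-ones case there is no clean way to see $\mathbf{k}^{\vee}$ appear. Second, the expansion of Proposition \ref{key proposition} produces unwanted terms indexed by $(\{1\}^{i-1},2,\{1\}^{w-1})$, which do not belong to the series $\mathcal{L}^{\star}_{\widehat{\mathcal{A}},\{1\}^w}$; the paper absorbs these by an induction on the level $n$ of $\mathcal{A}_{n,\mathbb{Z}[\mathbf{t}]}$, applying the induction hypothesis in $w+1$ variables and specializing the extra variable $t_0$ to $0$ and $1$. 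Your proposal has no mechanism for handling such leftover terms. Your closing suggestion to work in $\widehat{\mathfrak{H}}^1$ \`a la Rosen points in a reasonable direction, but as written the proof does not go through.
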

	We remark that if we take the $i=0$ part of the equality (\ref{1-var}), then we recover Theorem \ref{SS thm}. More generally, Sakugawa and the author proved the multi-variable and $\mathcal{A}_2$-version of Theorem \ref{SS thm} (\cite[Theorem 3.12]{SS}) and the main result of this paper is the $\boldsymbol{p}$-adic dual functional equation for the multi-variable FSMPs (= Theorem \ref{main cor}) which contains \cite[Theorem 3.12]{SS} and Theorem \ref{dual fn eq for 1-var} as special cases.
	
	This paper is organized as follows. In Section \ref{sec:Definitions}, we prepare some notation and define the finite multiple polylogarithms. In Section \ref{sec:reversal and duality}, we prove the $\boldsymbol{p}$-adic reversal theorem for FMPs and state the $\boldsymbol{p}$-adic duality theorem for FSMPs. In Section \ref{sec:Proof of main result}, we complete the proof of main results. 
	
	\section*{Acknowledgement}
	The author would like to express his sincere gratitude to his advisor Professor Tadashi Ochiai for carefully reading the manuscript and helpful comments. The author also thanks Dr.~Kenji Sakugawa for useful discussion and helpful advice. In addition, the author would like to thank the anonymous referee for pointing out several errors and useful suggestions. The proof of Proposition \ref{key proposition} was greatly shortened by his/her idea of using Lemma \ref{lemma by the referee}, though the author's original proof was based on more complicated calculations.
	\section{Notation and Definitions}
	\label{sec:Definitions}
	For a tuple of indeterminates $\mathbf{t} = (t_1, \dots, t_m)$, we define $\mathbf{t}_1$, $1-\mathbf{t}$, $\mathbf{t}^{-1}$, and $\overline{\mathbf{t}}$ to be $(t_1, \dots, t_{m-1}, 1)$, $(1-t_1, \dots, 1-t_m)$, $(t_1^{-1}, \dots, t_m^{-1})$, and $(t_m, \dots, t_1)$ respectively. We use the notation $R[\mathbf{t}]$ as a polynomial ring $R[t_1, \dots, t_m]$ for a ring $R$. 
	\subsection{Indices}
	\label{subsec:Indices}
	We call a tuple of positive integers $\mathbf{k} = (k_1, \dots, k_m)$ an index and we define the weight $\mathrm{wt}(\mathbf{k})$ (resp. depth $\mathrm{dep}(\mathbf{k})$) of $\mathbf{k}$ to be $k_1+\cdots +k_m$ (resp. $m$). 
	
	Let $\mathbf{k}=(k_1, k_2, \dots, k_m)$, $\mathbf{k}'=(k'_1, \dots, k'_m)$, and $\mathbf{l}=(l_1, \dots, l_n)$ be indices. Then, we define the reverse index $\overline{\mathbf{k}}$ of $\mathbf{k}$, the summation $\mathbf{k}\oplus\mathbf{k'}$, and the concatenation index $(\mathbf{k}, \mathbf{l})$ by
	\begin{align*}
	\overline{\mathbf{k}}&:=(k_m, \dots, k_1),\\
	\mathbf{k}\oplus \mathbf{k'} &:=(k_1+k'_1, \dots, k_m+k'_m),\\
	(\mathbf{k}, \mathbf{l}) &:= (k_1, \dots, k_m, l_1, \dots, l_n),
	\end{align*}
	respectively. We use the same notation for tuples of non-negative integers or indeterminates.
	
	Let $W$ be the free monoid generated by the set $\{0, 1\}$. We set $W_1 := W1$. Then, there exists a bijection from the set of all indices $I$ to $W_1$ induced by the correspondence
	\[
	\mathbf{k} = (k_1, \dots, k_m) \mapsto \underbrace{0 \cdots 0}_{k_1-1}1\underbrace{0 \cdots 0}_{k_2-1}1 \cdots 1\underbrace{0 \cdots 0}_{k_m-1}1
	\] 
	and we denote the bijection as $w$.
	Let $\tau \colon W \to W$ be a monoid homomorphism defined by $\tau(0)=1$ and $\tau(1)=0$.
	\begin{definition}[{cf. \cite[Section3]{Ho}}]
		For an index $\mathbf{k}$, we define the Hoffman dual $\mathbf{k}^{\vee}$ of $\mathbf{k}$ by the relation $w(\mathbf{k}^{\vee})=\tau(w(\mathbf{k})1^{-1})1$.
		\label{Hoffman dual}\end{definition}
	\noindent For any index $\mathbf{k}$, $\mathrm{wt}(\mathbf{k})=\mathrm{wt}(\mathbf{k}^{\vee})$ and $\mathrm{dep}(\mathbf{k})+\mathrm{dep}(\mathbf{k}^{\vee}) = \mathrm{wt}(\mathbf{k})+1$ hold.
	\subsection{The adelic ring}
	\label{subsec:adelic ring}
	In order to define the finite multiple polylogarithms, we introduce some adelic rings in a general setting.
	\begin{definition}
		Let $R$ be a commutative ring and $\Sigma$ an infinite family of ideals of $R$. Then, we define the ring $\mathcal{A}_{n, R}^{\Sigma}$ for each positive integer $n$ by 
		\begin{equation*}
		\mathcal{A}_{n, R}^{\Sigma} := \left( \prod_{I \in \Sigma}R/I^n \right) \left/ \left( \bigoplus_{I \in \Sigma}R/I^n\right) \right. .
		\end{equation*}
		Then, $\{\mathcal{A}_{n, R}^{\Sigma}\}$ becomes a projective system by natural projections and we define the ring $\widehat{\mathcal{A}}_{R}^{\Sigma}$ by 
		\begin{equation*}
		\widehat{\mathcal{A}}_R^{\Sigma} := \varprojlim_{n}\mathcal{A}_{n, R}^{\Sigma}.
		\end{equation*}
		We put the discrete topology on $\mathcal{A}_{n, R}^{\Sigma}$ for each $n$ and we define the topology of $\widehat{\mathcal{A}}_{R}^{\Sigma}$ to be the projective limit topology. 
		\label{def of adelic}\end{definition}
	\begin{lemma}
		We use the same notation as in Definition $\ref{def of adelic}$ and we define the $I$-adic completion $\widehat{R}_I$ of $R$ to be $\varprojlim_n R/I^nR$. Then, there exists the following natural surjective ring homomorphism$:$
		\[
		\pi\colon \prod_{I \in \Sigma}\widehat{R}_I \longrightarrow \widehat{\mathcal{A}}_R^{\Sigma}.
		\]
		\label{surjection}\end{lemma}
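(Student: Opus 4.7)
The plan is to construct $\pi$ via the universal property of the projective limit $\widehat{\mathcal{A}}_R^{\Sigma}=\varprojlim_n \mathcal{A}_{n,R}^{\Sigma}$, and then to establish surjectivity by an inductive lifting argument that upgrades compatibility ``modulo $\bigoplus_I R/I^n$'' to strict componentwise compatibility.

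For the construction, I would first note that, for each $n$, the natural reductions $\widehat{R}_I\twoheadrightarrow R/I^n$ assemble into a ring homomorphism $\prod_{I\in\Sigma}\widehat{R}_I\to \prod_{I\in\Sigma} R/I^n$; composing with the quotient by $\bigoplus_{I\in\Sigma}R/I^n$ yields $\pi_n\colon \prod_I\widehat{R}_I\to \mathcal{A}_{n,R}^{\Sigma}$. Since reducing modulo $I^{n+1}$ and then modulo $I^n$ coincides with reducing modulo $I^n$ directly, the $\pi_n$ are compatible with the transition maps of $\{\mathcal{A}_{n,R}^{\Sigma}\}$, and by the universal property of the projective limit they induce the desired continuous ring homomorphism $\pi$.

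For surjectivity, fix $\alpha=(\alpha_n)_n\in\widehat{\mathcal{A}}_R^{\Sigma}$. The plan is to build inductively a sequence $\beta_n=(\beta_n^I)_{I\in\Sigma}\in \prod_I R/I^n$ of lifts of $\alpha_n$ such that, for every $n$, the componentwise reduction of $\beta_{n+1}$ modulo $I^n$ equals $\beta_n$ \emph{exactly}, not merely modulo $\bigoplus_I R/I^n$. Start from any lift $\beta_1$ of $\alpha_1$. Assuming $\beta_n$ has been chosen, pick any lift $\gamma\in \prod_I R/I^{n+1}$ of $\alpha_{n+1}$; the compatibility $\alpha_{n+1}\mapsto \alpha_n$ in the projective system forces the set $F\subset\Sigma$ of indices $I$ with $\gamma\bmod I^n\neq \beta_n^I$ to be finite. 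For each $I\in F$, use the surjection $R/I^{n+1}\twoheadrightarrow R/I^n$ to lift $\beta_n^I$ to some $\widetilde{\beta}_n^I\in R/I^{n+1}$, and define $\beta_{n+1}$ by replacing the $I$-coordinate of $\gamma$ by $\widetilde{\beta}_n^I$ for each $I\in F$. Since $F$ is finite, $\beta_{n+1}$ still represents $\alpha_{n+1}$ in $\mathcal{A}_{n+1,R}^{\Sigma}$, while now $\beta_{n+1}\bmod I^n=\beta_n$ on the nose. Then for each $I$ the sequence $(\beta_n^I)_n$ defines an element $\gamma_I\in \widehat{R}_I$, and $(\gamma_I)_I\in \prod_I\widehat{R}_I$ maps to $\alpha$ by construction.

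The main obstacle is precisely the mismatch that this modification resolves: without it, arbitrary lifts of the $\alpha_n$ are only compatible up to a finitely supported difference, and for a fixed $I$ the coordinates $(\beta_n^I)_n$ need not form a Cauchy sequence in $R/I^n$, since the ``exceptional set'' of disagreements can include any prescribed $I$ at some stage $n$. The finite-support correction at each inductive step is exactly what is needed to produce strictly compatible sequences in each $\widehat{R}_I$; once this is in place, the remainder of the verification is formal.
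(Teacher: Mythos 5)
Your proof is correct, but where the paper disposes of surjectivity in one line, you prove the underlying fact by hand. The paper observes that
\[
0 \longrightarrow \Bigl\{ \textstyle\bigoplus_{I \in \Sigma}R/I^n \Bigr\} \longrightarrow \Bigl\{ \textstyle\prod_{I \in \Sigma}R/I^n \Bigr\} \longrightarrow \bigl\{ \mathcal{A}_{n, R}^{\Sigma} \bigr\} \longrightarrow 0
\]
is a short exact sequence of projective systems whose kernel system has surjective transition maps, hence satisfies the Mittag--Leffler condition, so passing to $\varprojlim_n$ preserves the surjection and one concludes via $\prod_{I}\widehat{R}_I \simeq \varprojlim_n \prod_{I}R/I^n$. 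Your inductive lifting argument --- choosing an arbitrary lift $\gamma$ of $\alpha_{n+1}$, noting that it can disagree with $\beta_n$ after reduction only on a finite set $F$, and correcting those finitely many coordinates using the surjectivity of $R/I^{n+1}\twoheadrightarrow R/I^n$ without changing the class in $\mathcal{A}_{n+1,R}^{\Sigma}$ --- is precisely the diagram chase that proves the Mittag--Leffler surjectivity criterion, specialized to this situation; the finite-support observation is exactly where the exactness of the sequence enters, and the coordinate correction is exactly where the surjective transition maps of the kernel system are used. What the paper's route buys is brevity and an appeal to a standard homological fact; what yours buys is self-containedness and a transparent view of why the ``finitely many exceptions'' structure of $\mathcal{A}_{n,R}^{\Sigma}$ is what makes strict compatibility achievable. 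Both are complete; only a trivial notational slip remains in yours (you should write the $I$-component $\gamma^I \bmod I^n \neq \beta_n^I$ when defining $F$).
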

	\begin{proof}
		For a short exact sequence of projective systems of rings
		\[
		0 \longrightarrow \left\{ \bigoplus_{I \in \Sigma}R/I^n \right\} \longrightarrow \left\{ \prod_{I \in \Sigma}R/I^n \right\} \longrightarrow \left\{ \mathcal{A}_{n, R}^{\Sigma} \right\} \longrightarrow 0,
		\]
		the system $\left\{ \bigoplus_{I \in \Sigma}R/I^n \right\}$ satisfies the Mittag-Leffler condition. Therefore, there exists a natural surjection
		\[
		\prod_{I \in \Sigma}\widehat{R}_I \simeq \varprojlim_n \prod_{I \in \Sigma}R/I^n \longrightarrow \widehat{\mathcal{A}}_R^{\Sigma}.
		\]
	\end{proof}
	\begin{remark}
		We assume that some topology of $R/I^n$ is defined for any $I \in \Sigma$. If we put the product topology on $\prod_{I \in \Sigma}R/I^n$ and the quotient topology on $\mathcal{A}_{n, R}^{\Sigma}$ by $\prod_{I \in \Sigma}R/I^n \twoheadrightarrow \mathcal{A}_{n, R}^{\Sigma}$, then the topology is indiscrete. However, we consider the discrete topology of $\mathcal{A}_{n, R}^{\Sigma}$ in this paper.
	\end{remark}
	\begin{lemma}
		We use the same notation as in Definition $\ref{def of adelic}$ and Definition $\ref{surjection}$. We assume that $I\widehat{R}_I$ is a principal ideal of $\widehat{R}_I$ for any $I \in \Sigma$. Furthermore, we define an ideal $\boldsymbol{I}$ of $\widehat{\mathcal{A}}_{R}^{\Sigma}$ to be $\pi ((I\widehat{R}_I)_{I \in \Sigma})$. Let $\pi_n$ be the  natural projection $\pi_n\colon \widehat{\mathcal{A}}_{R}^{\Sigma} \twoheadrightarrow \mathcal{A}_{n, R}^{\Sigma}$ for any positive integer $n$. Then, we have $\mathrm{ker}(\pi_n) = \boldsymbol{I}^n$. In particular, the topology of $\widehat{\mathcal{A}}_R^{\Sigma}$ coincides with the $\boldsymbol{I}$-adic topology and $\widehat{\mathcal{A}}_R^{\Sigma}$ is complete with respect to the $\boldsymbol{I}$-adic topology.
		\label{I-adic}\end{lemma}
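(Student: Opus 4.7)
The plan is to establish the key equality $\ker(\pi_n) = \mathbf{I}^n$ by proving both inclusions; the statements about topology and completeness will then follow formally from the definition of the projective limit topology.

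For the inclusion $\mathbf{I}^n \subseteq \ker(\pi_n)$, I would unwind definitions. Every element of $\mathbf{I}$ is, by definition, of the form $\pi((x_I)_{I\in\Sigma})$ with $x_I \in I\widehat{R}_I$, so a product of $n$ elements of $\mathbf{I}$ lifts to a tuple in $\prod_{I \in \Sigma}I^n\widehat{R}_I$, and hence so does any finite sum of such products. Applying $\pi_n$ reduces each coordinate modulo $I^n$, giving $0$ in every component. This direction uses only that $\pi$ is a ring homomorphism and does not need the principal ideal hypothesis.

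For the reverse inclusion $\ker(\pi_n) \subseteq \mathbf{I}^n$, take $\alpha \in \ker(\pi_n)$ and, using surjectivity of $\pi$ from Lemma \ref{surjection}, lift to $\alpha = \pi((y_I)_{I \in \Sigma})$. The kernel condition translates to $(y_I \bmod I^n)_{I \in \Sigma} \in \bigoplus_{I \in \Sigma}R/I^n$, so $y_I \in I^n\widehat{R}_I$ for all but finitely many $I$. Let $S$ be the exceptional finite set and define $y'_I := y_I$ for $I \notin S$ and $y'_I := 0$ for $I \in S$; then $(y_I - y'_I)_{I\in\Sigma}$ is supported on $S$, so its reduction modulo $I^m$ lies in $\bigoplus R/I^m$ for every $m$, forcing $\pi((y_I - y'_I)_{I\in\Sigma}) = 0$. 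Thus we may assume $y_I \in I^n\widehat{R}_I$ for every $I$. Here is where the principal ideal hypothesis enters: write $I\widehat{R}_I = \varpi_I\widehat{R}_I$, so that $I^n\widehat{R}_I = \varpi_I^n\widehat{R}_I$ and $y_I = \varpi_I^n w_I$ for some $w_I \in \widehat{R}_I$. Then
\[
\alpha = \pi\bigl((\varpi_I)_{I\in\Sigma}\bigr)^n \cdot \pi\bigl((w_I)_{I\in\Sigma}\bigr) \in \mathbf{I}^n,
\]
since $\pi((\varpi_I)_{I\in\Sigma}) \in \mathbf{I}$ and $\mathbf{I}^n$ is an ideal. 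I expect this passage from ``almost all coordinates'' to ``all coordinates'' to be the only delicate step; the remainder is formal.

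Once $\ker(\pi_n) = \mathbf{I}^n$ is established, the system $\{\mathbf{I}^n\}_n$ is cofinal with the defining fundamental system $\{\ker(\pi_n)\}_n$ of neighborhoods of $0$ in the projective limit topology on $\widehat{\mathcal{A}}_R^{\Sigma}$, so the two topologies coincide. Completeness with respect to the $\mathbf{I}$-adic topology follows from the canonical identification $\widehat{\mathcal{A}}_R^{\Sigma} = \varprojlim_n \mathcal{A}_{n,R}^{\Sigma} = \varprojlim_n \widehat{\mathcal{A}}_R^{\Sigma}/\mathbf{I}^n$.
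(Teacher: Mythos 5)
Your proposal is correct and follows essentially the same route as the paper's own proof: lift an element of $\ker(\pi_n)$ along the surjection of Lemma \ref{surjection}, observe that all but finitely many coordinates lie in $I^n\widehat{R}_I$, discard the finitely many exceptional coordinates (which does not change the image under $\pi$), and use the principal generator to factor out $\pi((\varpi_I)_{I\in\Sigma})^n$. The easy inclusion and the topological consequences are handled the same way in both arguments, so there is nothing to add.
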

	\begin{proof}
		Let $n$ be a positive integer. Take any element $x$ of $\mathrm{ker}(\pi_n)$. Then, there exists an element $\{x_I\}_{I \in \Sigma}$ of $\prod_{I \in \Sigma}\widehat{R}_I$ such that $x=\pi ((x_I)_{I \in \Sigma})$ by Lemma \ref{surjection}. By the commutative diagram
		\begin{equation*}
		\begin{CD}
		\displaystyle \prod_{I \in \Sigma}\widehat{R}_I @>\pi>> \widehat{\mathcal{A}}_R^{\Sigma} \\
		@V(\bmod{I^n})_{I \in \Sigma}VV @VV\pi_nV \\
		\displaystyle \prod_{I \in \Sigma}R/I^n @>\rho_n>> \mathcal{A}_{n, R}^{\Sigma}
		\end{CD}
		\end{equation*}
		we have
		\[
		\pi_n(x)=\pi_n \circ \pi ((x_I)_{I \in \Sigma})=\rho_n((x_I \bmod{I^n})_{I \in \Sigma}) = 0.
		\]
		Here, $\rho_n$ is the canonical projection. Therefore, there exists a subset $\Sigma'$ of $\Sigma$ such that $\Sigma \setminus \Sigma'$ is finite and $x_I \in I^n\widehat{R}_I$ for every $I \in \Sigma'$. We can take a generator $a_I$ of $I\widehat{R}_I$ for any $I \in \Sigma$ by the assumption. Then, there exists an element $\{y_I\}_{I \in \Sigma'}$ of $\prod_{I \in \Sigma'}\widehat{R}_I$ such that $x_I=a_I^ny_I$ holds for any $I \in \Sigma'$. We define $y_I$ to be zero for $I \in \Sigma \setminus \Sigma'$. Then, we have
		\[
		x=\pi ((x_I)_{I \in \Sigma}) = \pi ((a_I^ny_I)_{I \in \Sigma}) = (\pi((a_I)_{I \in \Sigma}))^n\cdot \pi ((y_I)_{I \in \Sigma}) \in \boldsymbol{I}^n
		\]
		and we obtain the inclusion $\mathrm{ker}(\pi_n) \subset \boldsymbol{I}^n$. The opposite inclusion is trivial and the last assertion follows from the fact that $\{ \mathrm{ker}(\pi_n)\}$ is a neighborhood basis of zero.
	\end{proof}
	In the rest of this paper, we only use the case $\Sigma = \{ pR \mid p \ \text{is a prime number}\}$ and we omit the notation $\Sigma$. We will define the finite multiple polylogarithms as elements of the $\mathbb{Q}$-algebra $\widehat{\mathcal{A}}_{\mathbb{Z}[\mathbf{t}]}$ in the next subsection. Let $\pi\colon \prod_p\widehat{\mathbb{Z}[\mathbf{t}]}_p \twoheadrightarrow \widehat{\mathcal{A}}_{\mathbb{Z}[\mathbf{t}]}$ be the natural surjection obtained by Lemma \ref{surjection} where $\widehat{\mathbb{Z}[\mathbf{t}]}_p = \varprojlim_n\mathbb{Z}[\mathbf{t}]/p^n\mathbb{Z}[\mathbf{t}]$ is the $p$-adic completion of $\mathbb{Z}[\mathbf{t}]$. Let $\pi_n\colon \widehat{\mathcal{A}}_{\mathbb{Z}[\mathbf{t}]} \twoheadrightarrow \mathcal{A}_{n, \mathbb{Z}[\mathbf{t}]}$ be the natural projection for each $n$. The topology of $\widehat{\mathcal{A}}_{\mathbb{Z}[\mathbf{t}]}$ coincides with the $\boldsymbol{p}$-adic topology and $\widehat{\mathcal{A}}_{\mathbb{Z}[\mathbf{t}]}$ is complete with respect to the topology by Lemma \ref{I-adic}. Since an equality $\pi \left( \left( \sum_{i=0}^{\infty}a_i^{(p)}p^i\right)_p \right) = \sum_{i=0}^{\infty}(a_i^{(p)})_p\boldsymbol{p}^i$ holds, in order to obtain a $\boldsymbol{p}$-adic relation, it is sufficient to show the $p$-adic relations given by taking the $p$-components for all but finitely many prime numbers $p$. Here, $a_i^{(p)} \in \mathbb{Z}_{(p)}[\mathbf{t}]$ and $\mathbb{Z}_{(p)}$ is the localization of $\mathbb{Z}$ at $p$, and note that the opposite assertion does not hold in general.
	\subsection{The finite multiple polylogarithms}
	\label{subsec:The finite multiple polylogaarithms}
	\begin{definition}
		Let $n$ be a positive integer, $\mathbf{k} = (k_1, \dots , k_m)$ an index, and $\mathbf{t} = (t_1, \dots, t_m)$ a tuple of indeterminates. Then, we define the four kinds of the truncated multiple polylogarithms which are elements of $\mathbb{Q}[\mathbf{t}]$ as follows:
		\begin{align*}
		\text{\rm \pounds}_{n, \mathbf{k}}^{\ast}(\mathbf{t}) &:= \sum_{n\geq n_1> \dots >n_m\geq 1}\frac{t_1^{n_1}\cdots t_m^{n_m}}{n_1^{k_1}\cdots n_m^{k_m}},\\
		\text{\rm \pounds}_{n, \mathbf{k}}^{\ast , \star}(\mathbf{t}) &:= \sum_{n\geq n_1\geq \dots \geq n_m\geq 1}\frac{t_1^{n_1}\cdots t_m^{n_m}}{n_1^{k_1}\cdots n_m^{k_m}},\\
		\text{\rm \pounds}_{n,\mathbf{k}}^{\scalebox{0.7}{\text{\cyr sh}}}(\mathbf{t}) &:= \sum_{n \geq n_1> \dots >n_m\geq 1}\frac{t_1^{n_1-n_2}\cdots t_{m-1}^{n_{m-1}-n_m}t_m^{n_m}}{n_1^{k_1}\cdots n_m^{k_m}},\\
		\text{\rm \pounds}_{n, \mathbf{k}}^{\scalebox{0.7}{\text{\cyr sh}}, \star}(\mathbf{t}) &:= \sum_{n\geq n_1\geq \dots \geq n_m\geq 1}\frac{t_1^{n_1-n_2}\cdots t_{m-1}^{n_{m-1}-n_m}t_m^{n_m}}{n_1^{k_1}\cdots n_m^{k_m}}.
		\end{align*}
	\end{definition}
	\begin{definition}
		Let $\mathbf{k} = (k_1, \dots , k_m)$ be an index and $\mathbf{t} = (t_1, \dots, t_m)$ a tuple of indeterminates. Then, we define the four kinds of the finite multiple polylogarithms which are elements of $\widehat{\mathcal{A}}_{\mathbb{Z}[\mathbf{t}]}$ as follows:
		\begin{align*}
		\text{\rm \pounds}_{\widehat{\mathcal{A}}, \mathbf{k}}^{\ast}(\mathbf{t}) &:= \pi ((\text{\rm \pounds}_{p-1, \mathbf{k}}^{\ast}(\mathbf{t}))_p) \ \ (\text{\emph{the finite harmonic multiple polylogarithm}}),\\
		\text{\rm \pounds}_{\widehat{\mathcal{A}}, \mathbf{k}}^{\ast, \star}(\mathbf{t}) &:= \pi ((\text{\rm \pounds}_{p-1, \mathbf{k}}^{\ast, \star}(\mathbf{t}))_p) \ \ (\text{\emph{the finite harmonic star-multiple polylogarithm}}),\\
		\text{\rm \pounds}_{\widehat{\mathcal{A}}, \mathbf{k}}^{\scalebox{0.7}{\text{\cyr sh}}}(\mathbf{t}) &:= \pi ((\text{\rm \pounds}_{p-1, \mathbf{k}}^{\scalebox{0.7}{\text{\cyr sh}}}(\mathbf{t}))_p) \ \ (\text{\emph{the finite shuffle multiple polylogarithm}}),\\
		\text{\rm \pounds}_{\widehat{\mathcal{A}}, \mathbf{k}}^{\scalebox{0.7}{\text{\cyr sh}}, \star}(\mathbf{t}) &:= \pi ((\text{\rm \pounds}_{p-1, \mathbf{k}}^{\scalebox{0.7}{\text{\cyr sh}}, \star}(\mathbf{t}))_p) \ \ (\text{\emph{the finite shuffle star-multiple polylogarithm}}).
		\end{align*}
		This definition is well-defined since $\text{\rm \pounds}_{p-1, \mathbf{k}}^{\circ, \bullet}(\mathbf{t})$ is an element of $\mathbb{Z}_{(p)}[\mathbf{t}]$ for each prime number $p$, $\circ \in \{\ast, \text{{\cyr sh}}\}$, and $\bullet \in \{\emptyset, \star\}$. We also define the finite multiple polylogarithm $\text{\rm \pounds}_{\mathcal{A}_n, \mathbf{k}}^{\circ, \bullet}(\mathbf{t})$ as elements of $\mathcal{A}_{n, \mathbb{Z}[\mathbf{t}]}$ by 
		\[
		\text{\rm \pounds}_{\mathcal{A}_n, \mathbf{k}}^{\circ, \bullet}(\mathbf{t}) := \pi_n(\text{\rm \pounds}_{\widehat{\mathcal{A}}, \mathbf{k}}^{\circ, \bullet}(\mathbf{t}))
		\]
		for each positive integer $n$, $\circ \in \{\ast, \text{{\cyr sh}}\}$, and $\bullet \in \{\emptyset, \star\}$.
		We define 1-variable finite (star-) multiple polylogarithms as follows:
		\begin{equation*}
		\begin{split}
		\text{\rm \pounds}_{\widehat{\mathcal{A}}, \mathbf{k}}^{\bullet}(t) &:= \text{\rm \pounds}_{\widehat{\mathcal{A}}, \mathbf{k}}^{\ast , \bullet} (t, \{ 1 \}^{m-1}) = \text{\rm \pounds}_{\widehat{\mathcal{A}}, \mathbf{k}}^{\scalebox{0.7}{\text{\cyr sh}} , \bullet} (\{ t \}^{m}) \in \widehat{\mathcal{A}}_{\mathbb{Z}[t]},\\
		\widetilde{\text{\rm \pounds}}_{\widehat{\mathcal{A}}, \mathbf{k}}^{\bullet}(t) &:= \text{\rm \pounds}_{\widehat{\mathcal{A}}, \mathbf{k}}^{\ast , \bullet} (\{ 1 \}^{m-1}, t) = \text{\rm \pounds}_{\widehat{\mathcal{A}}, \mathbf{k}}^{\scalebox{0.7}{\text{{\cyr sh}}} , \bullet} (\{ 1 \}^{m-1}, t) \in \widehat{\mathcal{A}}_{\mathbb{Z}[t]},
		\end{split}
		\end{equation*}
		where $t$ is an indeterminate and $\bullet \in \{ \emptyset , \star \}$.
		\label{def of FMP}\end{definition}
	\section{The $\boldsymbol{p}$-adic reversal theorem and the $\boldsymbol{p}$-adic duality theorem}
	\label{sec:reversal and duality}
	\subsection{The $\boldsymbol{p}$-adic reversal theorem for FMPs}
	\label{subsec:Asymptotic reversal theorems}
	The reversal relation for FMZVs (\cite[Theorem 4.5]{Ho}) has been extended to several general cases. For example, Rosen proved the $\boldsymbol{p}$-adic reversal relation for FMZVs in $\widehat{\mathcal{A}}$ (\cite[Theorem 4.1]{Ro}) and Sakugawa and the author proved the reversal relation for FMPs in $\mathcal{A}_{2, \mathbb{Z}[\mathbf{t}]}$ (\cite[Proposition 3.11]{SS}). Here, we prove the $\boldsymbol{p}$-adic reversal relation for FMPs in $\widehat{\mathcal{A}}_{\mathbb{Z}[\mathbf{t}]}$.
	\begin{theorem}Let $\mathbf{k} = (k_1, \dots, k_m)$ be an index, $\mathbf{t}=(t_1, \dots, t_m)$ a tuple of indeterminates, and $\bullet \in \{\emptyset, \star\}$. Then, we have the following $\boldsymbol{p}$-adic relation in $\widehat{\mathcal{A}}_{\mathbb{Z}[\mathbf{t}]}:$
		\begin{equation*}
		\text{\rm \pounds}_{\widehat{\mathcal{A}}, \overline{\mathbf{k}}}^{\ast, \bullet}(\mathbf{t})=(-1)^{\mathrm{wt}(\mathbf{k})}(t_1\cdots t_m)^{\boldsymbol{p}}\sum_{i=0}^{\infty}\sum_{\substack{\mathbf{l}=(l_1, \dots, l_m) \in \mathbb{Z}_{\geq 0}^m \\ \mathrm{wt}(\mathbf{l})=i}}\left[ \prod_{j=1}^m\binom{k_j+l_j-1}{l_j}\right] \text{\rm \pounds}_{\widehat{\mathcal{A}}, \mathbf{k}\oplus \mathbf{l}}^{\ast, \bullet}(\overline{\mathbf{t}^{-1}})\boldsymbol{p}^i,
		\end{equation*}
		where $(t_1\cdots t_m)^{\boldsymbol{p}}=((t_1\cdots t_m)^p)_p \in \widehat{\mathcal{A}}_{\mathbb{Z}[\mathbf{t}]}$ and $\text{\rm \pounds}_{\widehat{\mathcal{A}}, \mathbf{k}\oplus \mathbf{l}}^{\ast, \bullet}(\overline{\mathbf{t}^{-1}})$ is an element of $\widehat{\mathcal{A}}_{\mathbb{Z}[\mathbf{t}^{-1}]}$.
	\end{theorem}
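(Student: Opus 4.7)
The plan is to establish the identity first at the level of the truncated polylogarithms modulo $p^N$ for every sufficiently large prime $p$ and every $N\ge 1$, and then to assemble these congruences into the stated $\mathbf{p}$-adic identity in $\widehat{\mathcal{A}}_{\mathbb{Z}[\mathbf{t}]}$ through the surjection $\pi$; as noted at the end of Subsection~\ref{subsec:adelic ring}, a $\mathbf{p}$-adic equality follows from compatible $p$-adic equalities at all but finitely many primes. The central manipulation is the substitution $n_j\mapsto p-n'_{m+1-j}$ in the truncated sum defining $\text{\rm \pounds}_{p-1,\overline{\mathbf{k}}}^{\ast,\bullet}(\mathbf{t})$. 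Since $1\le n_j\le p-1$, each $p-n'_{m+1-j}$ is a unit modulo $p^N$, and the substitution bijectively preserves the chain of (strict or non-strict) inequalities on the summation indices. After relabeling $n'_j\to n_j$ and using $t_j^{p-n'_{m+1-j}}=t_j^{p}\cdot t_j^{-n'_{m+1-j}}$ to extract the factor $(t_1\cdots t_m)^p$, the expression becomes
\[
\text{\rm \pounds}_{p-1,\overline{\mathbf{k}}}^{\ast,\bullet}(\mathbf{t})=(t_1\cdots t_m)^p\sum\prod_{j=1}^m\frac{t_{m+1-j}^{-n_j}}{(p-n_j)^{k_j}},
\]
where the outer sum ranges over the same chain as before.

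Next I would expand each factor $p$-adically via
\[
\frac{1}{(p-n)^k}=(-1)^k\sum_{l\ge 0}\binom{k+l-1}{l}\frac{p^l}{n^{k+l}},
\]
multiply the $m$ resulting series, and group by $\mathbf{l}=(l_1,\dots,l_m)\in\mathbb{Z}_{\ge 0}^m$. With $\mathbf{l}$ fixed, the remaining sum over the summation indices is precisely $\text{\rm \pounds}_{p-1,\mathbf{k}\oplus\mathbf{l}}^{\ast,\bullet}(\overline{\mathbf{t}^{-1}})$, and the accumulated scalar factor is $(-1)^{\mathrm{wt}(\mathbf{k})}\prod_{j}\binom{k_j+l_j-1}{l_j}\,p^{\mathrm{wt}(\mathbf{l})}$. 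Truncating the geometric series at $\mathrm{wt}(\mathbf{l})<N$ yields the identity modulo $p^N$. The factor $\mathbf{p}^{i}$ on the right-hand side of the theorem ensures $\mathbf{p}$-adic convergence of the outer sum in $\widehat{\mathcal{A}}_{\mathbb{Z}[\mathbf{t}]}$, so applying $\pi$ and letting $N\to\infty$ delivers the claim.

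I expect the principal difficulty to be purely combinatorial bookkeeping: tracking the two simultaneous reversals (of the summation indices and of the components of $\mathbf{k}$) during the substitution, and confirming that $(t_1\cdots t_m)^p$ absorbs every negative exponent so that each partial sum genuinely lies in $\mathbb{Z}[\mathbf{t}]$ rather than only in $\mathbb{Z}[\mathbf{t},\mathbf{t}^{-1}]$. The star case $\bullet=\star$ requires no new idea, since the substitution $n\mapsto p-n$ respects $\ge$ exactly as it respects $>$.
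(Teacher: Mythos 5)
Your proposal is correct and follows essentially the same route as the paper's proof: the substitution $n_i\mapsto p-n_{m+1-i}$, the $p$-adic expansion of $1/(p-n)^k$ via the negative binomial series, and regrouping by $\mathbf{l}$ before descending to $\widehat{\mathcal{A}}_{\mathbb{Z}[\mathbf{t}]}$ through $\pi$. The only cosmetic difference is that you truncate modulo $p^N$ and pass to the limit, whereas the paper works directly in $\widehat{\mathbb{Z}[\mathbf{t}]}_p$; the bookkeeping concerns you raise (the double reversal and the absorption of negative exponents by $(t_1\cdots t_m)^p$, using $n_j\le p-1$) are exactly the points the paper's computation handles.
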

	\begin{proof}
		Let $p$ be a prime number. Since a $p$-adically convergent identity
		\begin{equation*}
		\frac{1}{(p-n)^k}=(-1)^k\sum_{l=0}^{\infty}\binom{k+l-1}{l}\frac{p^l}{n^{k+l}}
		\label{p-adic reverse}\end{equation*}
		holds for a positive integer $n < p$, by the substitutions $n_i \mapsto p-n_{m+1-i}$, we have
		\begin{equation*}
		\begin{split}
		\text{\rm \pounds}_{p-1, \overline{\mathbf{k}}}^{\ast}(\mathbf{t}) &= \sum_{p-1 \geq n_1 > \cdots > n_m \geq 1}\frac{t_1^{n_1}\cdots t_m^{n_m}}{n_1^{k_m}\cdots n_m^{k_1}} \\ &= \sum_{p-1 \geq p-n_m > \cdots > p-n_1 \geq 1} \frac{t_1^{p-n_m}\cdots t_m^{p-n_1}}{(p-n_m)^{k_m}\cdots (p-n_1)^{k_1}} \\ &= (-1)^{\mathrm{wt}(\mathbf{k})}(t_1\cdots t_m)^{p}\\ & \ \ \ \times \sum_{p-1 \geq n_1 > \cdots > n_m \geq 1}\sum_{(l_1, \dots, l_m ) \in \mathbb{Z}^m_{\geq 0}}\left[ \prod_{j=1}^m\binom{k_j+l_j-1}{l_j} \right]\frac{t_m^{-n_1}\cdots t_1^{-n_m}}{n_1^{k_1+l_1}\cdots n_m^{k_m+l_m}}p^{l_1+\cdots +l_m}\\ &= (-1)^{\mathrm{wt}(\mathbf{k})}(t_1\cdots t_m)^p\sum_{i=0}^{\infty}\sum_{\substack{\mathbf{l}=(l_1, \dots, l_m) \in \mathbb{Z}_{\geq 0}^m \\ \mathrm{wt}(\mathbf{l})=i}}\left[ \prod_{j=1}^m \binom{k_j+l_j-1}{l_j} \right] \text{\rm \pounds}_{p-1, \mathbf{k}\oplus \mathbf{l}}^{\ast}(\overline{\mathbf{t}^{-1}})p^i
		\end{split}
		\end{equation*}
		in the ring $\widehat{\mathbb{Z}[\mathbf{t}]}_p$. Therefore, we have the conclusion for non-star case. The star case is similar.
	\end{proof}
	
	\subsection{The $\boldsymbol{p}$-adic duality theorem for FSMPs}
	\label{subsec:main results}
	In this subsection, we state the main results. Let $\mathbf{k}$ be an index and $\mathbf{t}$ a tuple of $\mathrm{dep}(\mathbf{k})$ indeterminates. We define a $\boldsymbol{p}$-adically convergent series $\mathcal{L}_{\widehat{\mathcal{A}}, \mathbf{k}}^{\star}(\mathbf{t})$ with FSSMPs-coefficients by 
	\begin{equation}
	\mathcal{L}_{\widehat{\mathcal{A}}, \mathbf{k}}^{\star}(\mathbf{t}) := \sum_{i=0}^{\infty}\left( \text{\rm \pounds}_{\widehat{\mathcal{A}}, (\{1\}^i, \mathbf{k})}^{\scalebox{0.7}{\text{\cyr sh}}, \star}(\{1\}^i, \mathbf{t})-\frac{1}{2}\text{\rm \pounds}_{\widehat{\mathcal{A}}, (\{1\}^i, \mathbf{k})}^{\scalebox{0.7}{\text{\cyr sh}}, \star}(\{1\}^i, \mathbf{t}_1)\right)\boldsymbol{p}^i.
	\label{series}\end{equation}
	\begin{theorem}
		Let $w$ be a positive integer and $\mathbf{t}$ a tuple of $w$ indeterminates. Then, we have
		\begin{equation*}
		\mathcal{L}_{\widehat{\mathcal{A}}, \{1\}^w}^{\star}(\mathbf{t}) = \mathcal{L}_{\widehat{\mathcal{A}}, \{1\}^w}^{\star}(1-\mathbf{t})
		\end{equation*}
		in the ring $\widehat{\mathcal{A}}_{\mathbb{Z}[\mathbf{t}]}$.
		\label{main thm}\end{theorem}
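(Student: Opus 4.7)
The plan is to reduce the identity to the $p$-components and exploit an explicit closed-form evaluation of the weighted series. By Lemma~\ref{surjection}, the natural surjection $\pi\colon\prod_p\widehat{\mathbb{Z}[\mathbf{t}]}_p\twoheadrightarrow\widehat{\mathcal{A}}_{\mathbb{Z}[\mathbf{t}]}$ (together with the freedom to discard finitely many primes) reduces the problem to verifying, for every odd prime $p$, the corresponding identity in $\widehat{\mathbb{Z}[\mathbf{t}]}_p$. For such a $p$, set
\[
F_p(\mathbf{t}):=\sum_{i=0}^\infty p^i\,\text{\rm \pounds}^{\scalebox{0.7}{\text{\cyr sh}},\star}_{p-1,\{1\}^{i+w}}(\{1\}^i,\mathbf{t}).
\]
Factoring the inner summation over $p-1\ge n_1\ge\cdots\ge n_i\ge n_{i+1}$, the inner sum becomes the complete homogeneous symmetric polynomial $h_i(1/n_{i+1},\ldots,1/(p-1))$, so the generating identity $\sum_i T^i h_i(x_1,\ldots,x_k)=\prod_j(1-Tx_j)^{-1}$ collapses the $i$-summation to a finite product. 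Together with the evaluation $\prod_{j=m}^{p-1}j/(j-p)=(-1)^{m-1}\binom{p-1}{m-1}$ (for odd $p$) and the identity $\binom{p-1}{m-1}/m=\binom{p}{m}/p$, this yields the polynomial closed form
\[
F_p(\mathbf{t})=\frac{1}{p}\sum_{p-1\ge m_1\ge\cdots\ge m_w\ge 1}(-1)^{m_1-1}\binom{p}{m_1}\frac{t_1^{m_1-m_2}\cdots t_{w-1}^{m_{w-1}-m_w}t_w^{m_w}}{m_2\cdots m_w}
\]
(with the convention $m_{w+1}=0$), which lies in $\mathbb{Z}_{(p)}[\mathbf{t}]$ since $\binom{p}{m_1}/p\in\mathbb{Z}$ for $1\le m_1\le p-1$.

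Rewriting the claim as $F_p(\mathbf{t})-F_p(1-\mathbf{t})=\tfrac12[F_p(\mathbf{t}_1)-F_p((1-\mathbf{t})_1)]$, the base case $w=1$ follows immediately from the binomial-theorem evaluation $F_p(t)=(1-(1-t)^p-t^p)/p$, since this is manifestly invariant under $t\leftrightarrow 1-t$ and $F_p(1)=0$, so both sides vanish. For $w\ge 2$, I would proceed by induction on $w$, extracting the outermost $m_1$-summation via the truncated binomial identity
\[
\sum_{m=m_2}^{p-1}(-1)^{m-1}\binom{p}{m}t_1^m=1-(1-t_1)^p-t_1^p-\sum_{m=1}^{m_2-1}(-1)^{m-1}\binom{p}{m}t_1^m.
\]
This decomposes $F_p(\mathbf{t})$ into a principal part proportional to $(1-(1-t_1)^p-t_1^p)/p$ (itself symmetric under $t_1\leftrightarrow 1-t_1$) multiplied by a lower-depth truncated polylogarithm in the rescaled variables $t_2/t_1,\ldots,t_w/t_{w-1}$, plus a residual built from the truncated binomials.

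The main obstacle will be handling this residual term. Direct computation (one can verify, e.g., for $w=2$ and $p=3,5$, that both $F_p(\mathbf{t})-F_p(1-\mathbf{t})$ and the subclaim $F_p(\mathbf{t}_1)=-F_p((1-\mathbf{t})_1)$ hold) shows that $F_p(\mathbf{t})-F_p(1-\mathbf{t})$ is in fact independent of the last indeterminate $t_w$ and equals exactly $\tfrac12[F_p(\mathbf{t}_1)-F_p((1-\mathbf{t})_1)]$. Establishing this reduces to a combinatorial identity comparing $\sum_{m<k}\binom{p}{m}(-t)^m$ with $\sum_{m<k}\binom{p}{m}(-(1-t))^m$, suitably interlaced with the lower-depth shuffle-star FMPs evaluated at the closed tuple $\mathbf{t}_1$. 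I expect this symmetrization identity to be the technical heart of the argument and to correspond to the key proposition and referee-suggested lemma acknowledged in the paper; once it is in hand, the induction closes and Theorem~\ref{main thm} follows.
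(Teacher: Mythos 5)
Your reduction to $p$-components and your closed-form evaluation of $F_p(\mathbf{t})$ are both correct: summing the geometric series $\sum_i p^i h_i(1/n_1,\dots,1/(p-1))=\prod_{j=n_1}^{p-1}(1-p/j)^{-1}=(-1)^{n_1-1}\binom{p-1}{n_1-1}$ (for odd $p$) is precisely the content of Lemma \ref{lemma by the referee}, and absorbing the factor $1/n_1$ into the binomial via $\binom{p-1}{n_1-1}/n_1=\binom{p}{n_1}/p$ packages the entire weighted series as a single polynomial in $\mathbb{Z}_{(p)}[\mathbf{t}]$ with no correction terms. This is a genuinely different, and cleaner, organization than the paper's: there the prefactor $(1-p/n_1)$ is kept separate (Proposition \ref{key proposition}), which produces the extra terms with index $(\{1\}^{i-1},2,\{1\}^{w-1})$ and forces an induction on $n$ (the level of $\mathcal{A}_n$) to reassemble them into the series $\mathcal{L}^{\star}$. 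Your formulation would eliminate that induction entirely.

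The gap is that the resulting finite polynomial identity
\[
F_p(\mathbf{t})-\tfrac{1}{2}F_p(\mathbf{t}_1)=F_p(1-\mathbf{t})-\tfrac{1}{2}F_p((1-\mathbf{t})_1),
\]
which after your reduction \emph{is} the theorem, is never proved. You verify $w=1$, check $w=2$ numerically for $p=3,5$, and sketch an induction on $w$ whose inductive step (the ``residual term'' coming from the truncated binomial sums) you yourself flag as the main obstacle and leave open. Moreover, this identity is not the one the paper imports: Proposition \ref{symmetric identity} (that is, \cite[Theorem 2.5]{SS}) carries the weight $(-1)^{n_1}\binom{N}{n_1}/(n_1\cdots n_w)$ and, crucially, relates a binomially weighted sum in $\mathbf{t}$ to an \emph{unweighted} sum in $1-\mathbf{t}$, whereas your identity carries the weight $(-1)^{n_1-1}\binom{p-1}{n_1-1}/(n_1\cdots n_w)$ symmetrically on both sides; since $(-1)^{n_1}\binom{p-1}{n_1}=(1-p/n_1)\cdot(-1)^{n_1-1}\binom{p-1}{n_1-1}$, the two weighted sums differ by $p$ times a weight-$(w+1)$ sum, so you cannot simply cite the known result. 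Until this symmetrization identity is established (either directly, or by deriving it from \cite[Theorem 2.5]{SS}, which as far as I can see reintroduces exactly the $p$-adic bookkeeping and induction your approach was designed to avoid), the argument is incomplete at its central step.
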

	We will give a proof of Theorem \ref{main thm} in the next section. Since finite multiple polylogarithms in (\ref{series}) are of shuffle type, the case $\mathbf{k}=\{1\}^w$ (= Theorem \ref{main thm}) is essential. In fact, the following lemma holds:
	\begin{lemma}
		Let $\mathbf{k}=(k_1, \dots, k_m)$ be an index, $w = \mathrm{wt}(\mathbf{k})$, $\mathbf{t} = (t_1, \dots, t_m)$ a tuple of indeterminates, and $\bullet \in \{\emptyset, \star\}$. Then, we have
		\[
		\text{\rm \pounds}_{\widehat{\mathcal{A}}, \mathbf{k}}^{\scalebox{0.7}{\text{\cyr sh}}, \bullet}(\mathbf{t})=\text{\rm \pounds}_{\widehat{\mathcal{A}}, \{1\}^w}^{\scalebox{0.7}{\text{\cyr sh}}, \bullet}(\{0\}^{k_1-1}, t_1, \dots, \{0\}^{k_m-1}, t_m).
		\]
		\label{general index lemma}\end{lemma}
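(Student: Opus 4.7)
The plan is to reduce the claimed identity in $\widehat{\mathcal{A}}_{\mathbb{Z}[\mathbf{t}]}$ to a sequence of polynomial identities between the underlying truncated multiple polylogarithms, one for each prime $p$, and then verify these by a direct combinatorial collapse argument on the summation indices. Since $\pi$ is a ring homomorphism and the FMPs are defined as $\pi$ applied componentwise to $(\text{\rm \pounds}_{p-1,\cdot}^{\scalebox{0.7}{\text{\cyr sh}},\bullet}(\cdot))_p$, it suffices to show that, for every prime $p$,
\[
\text{\rm \pounds}_{p-1, \mathbf{k}}^{\scalebox{0.7}{\text{\cyr sh}}, \bullet}(t_1, \dots, t_m) \;=\; \text{\rm \pounds}_{p-1, \{1\}^w}^{\scalebox{0.7}{\text{\cyr sh}}, \bullet}(\{0\}^{k_1-1}, t_1, \dots, \{0\}^{k_m-1}, t_m)
\]
holds in $\mathbb{Z}_{(p)}[\mathbf{t}]$, under the standard convention $0^0 = 1$ and $0^n = 0$ for $n \geq 1$.

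Set $a_0 := 0$ and $a_i := k_1 + \cdots + k_i$ for $i = 1, \dots, m$, and write $(s_1, \dots, s_w)$ for the substituted tuple, so that $s_{a_i} = t_i$ and $s_j = 0$ for $j \notin \{a_1, \dots, a_m\}$. Expanding the right-hand side in the star case via Definition \ref{def of FMP}, a summand indexed by $p-1 \geq N_1 \geq \cdots \geq N_w \geq 1$ contributes nontrivially only when each factor $s_j^{N_j - N_{j+1}}$ (for $j < w$) is nonzero; by the $0^0 = 1$ convention this forces $N_j = N_{j+1}$ for every $j \notin \{a_1, \dots, a_m\}$. Hence the $N_j$'s within each block $a_{i-1}+1 \leq j \leq a_i$ are all equal to a common value $m_i$. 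The denominator $N_1 \cdots N_w$ collapses to $\prod_{i=1}^m m_i^{k_i}$ (since each $m_i$ is repeated $k_i$ times), the numerator becomes $\prod_{i=1}^{m-1} t_i^{m_i - m_{i+1}} \cdot t_m^{m_m}$, and the constraints reduce to $p-1 \geq m_1 \geq \cdots \geq m_m \geq 1$; this is exactly the defining sum of $\text{\rm \pounds}_{p-1, \mathbf{k}}^{\scalebox{0.7}{\text{\cyr sh}}, \star}(\mathbf{t})$. The non-star case is handled by the same block-collapse applied to the strictly decreasing indices.

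The proof is essentially a bookkeeping exercise and no significant obstacle is anticipated. The only point requiring care is the consistent application of the convention $0^0 = 1$ when identifying the surviving terms in the $\text{\rm \pounds}_{p-1, \{1\}^w}$-expansion; once this is set up, each surviving tuple $(N_1, \dots, N_w)$ corresponds bijectively to a tuple $(m_1, \dots, m_m)$ indexing the left-hand sum, and the identification of the two defining sums is immediate. Lifting back via $\pi$ yields the stated equality in $\widehat{\mathcal{A}}_{\mathbb{Z}[\mathbf{t}]}$.
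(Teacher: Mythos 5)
Your treatment of the star case is correct and is exactly the ``easy check from the definition'' that the paper leaves implicit: with $s_{a_i}=t_i$ and $s_j=0$ elsewhere, the convention $0^0=1$ forces $N_j=N_{j+1}$ at every position $j\notin\{a_1,\dots,a_m\}$ in the weakly decreasing sum, the indices collapse blockwise to $p-1\geq m_1\geq\cdots\geq m_m\geq 1$, and the numerator and denominator reassemble into the defining sum of $\text{\rm \pounds}_{p-1,\mathbf{k}}^{\scalebox{0.7}{\text{\cyr sh}},\star}(\mathbf{t})$. Reducing to the $p$-components via $\pi$ is also fine.

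However, your closing claim that ``the non-star case is handled by the same block-collapse applied to the strictly decreasing indices'' is a step that fails. In the non-star sum the indices satisfy $N_1>\cdots>N_w$, so $N_j-N_{j+1}\geq 1$ for every $j<w$, and every factor $s_j^{N_j-N_{j+1}}$ with $s_j=0$ vanishes; the collapse condition $N_j=N_{j+1}$ is never satisfiable. Hence $\text{\rm \pounds}_{p-1,\{1\}^w}^{\scalebox{0.7}{\text{\cyr sh}}}(\{0\}^{k_1-1},t_1,\dots,\{0\}^{k_m-1},t_m)$ is identically zero as soon as some $k_i\geq 2$, while the left-hand side is not: already for $\mathbf{k}=(2)$ the asserted identity would read $\sum_{p-1\geq n\geq 1}t_1^{n}/n^{2}=\sum_{p-1\geq N_1>N_2\geq 1}0^{N_1-N_2}\,t_1^{N_2}/(N_1N_2)=0$, which is false. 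So the non-star half of the lemma is either vacuous (all $k_i=1$) or incorrect as stated; this is arguably a defect of the statement itself rather than of your argument, and only the star case is used later (the series $\mathcal{L}^{\star}_{\widehat{\mathcal{A}},\mathbf{k}}$ is built from $\text{\rm \pounds}^{\scalebox{0.7}{\text{\cyr sh}},\star}$), but you should have detected and flagged the failure instead of asserting that the same argument goes through.
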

	\begin{proof}
		We can easily check it by the definition of the finite shuffle multiple polylogarithms.
	\end{proof}
	The main result of this paper is as follows: 
	\begin{theorem}
		Let $r$ be a positive integer, $\mathbf{k}_1, \dots, \mathbf{k}_r$ indices, and $\mathbf{t} = (t_1, \dots, t_r)$ a tuple of indeterminates. We define an index $\mathbf{k}$ to be $(\mathbf{k}_1, \dots, \mathbf{k}_r)$ and $\mathbf{k}'$ to be $(\mathbf{k}_1^{\vee}, \dots, \mathbf{k}_r^{\vee})$. Furthermore, we define $l_i$ and $l'_i$ by $l_i := \mathrm{dep}(\mathbf{k}_i)$ and $l'_i := \mathrm{dep}(\mathbf{k}_i^{\vee})$ respectively for $i=1, \dots, r$. Then, we have
		\[
		\mathcal{L}_{\widehat{\mathcal{A}}, \mathbf{k}}^{\star}(\{1\}^{l_1-1}, t_1, \dots, \{1\}^{l_r-1}, t_r) = \mathcal{L}_{\widehat{\mathcal{A}}, \mathbf{k}'}^{\star}(\{1\}^{l'_1-1}, 1-t_1, \dots, \{1\}^{l'_r-1}, 1-t_r)
		\]	
		in the ring $\widehat{\mathcal{A}}_{\mathbb{Z}[\mathbf{t}]}$.
		\label{main cor}\end{theorem}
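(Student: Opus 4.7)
The plan is to deduce Theorem \ref{main cor} from the all-ones case Theorem \ref{main thm} via Lemma \ref{general index lemma}, which re-expresses an FSMP of arbitrary index as an FSMP of index $\{1\}^{w}$ evaluated at a tuple of $0$'s, $1$'s, and the original indeterminates inserted according to the word of the index.

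First I would apply Lemma \ref{general index lemma} termwise to the defining series \eqref{series} of $\mathcal{L}^{\star}_{\widehat{\mathcal{A}}, \mathbf{k}}(\{1\}^{l_1-1}, t_1, \ldots, \{1\}^{l_r-1}, t_r)$. For each $i$ the inner index $(\{1\}^i, \mathbf{k})$ has weight $i+w$, where $w := \mathrm{wt}(\mathbf{k})$; the prefix $\{1\}^i$ with inputs $\{1\}^i$ expands trivially to $\{1\}^i$, while each block $\mathbf{k}_\alpha$ with inputs $(\{1\}^{l_\alpha-1}, t_\alpha)$ expands to the word $w(\mathbf{k}_\alpha)$ with its trailing letter $1$ replaced by $t_\alpha$ and its remaining letters $1$ replaced by $1$. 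Calling this length-$w$ tuple $\mathbf{u}$ and observing that the $\mathbf{t}_1$-specialization of the original input tuple corresponds under this expansion to $\mathbf{u}_1$ (both merely replace $t_r$ by $1$), one obtains
\[
\mathcal{L}^{\star}_{\widehat{\mathcal{A}}, \mathbf{k}}(\{1\}^{l_1-1}, t_1, \ldots, \{1\}^{l_r-1}, t_r) = \mathcal{L}^{\star}_{\widehat{\mathcal{A}}, \{1\}^{w}}(\mathbf{u}).
\]
An identical argument applied to the right-hand side of Theorem \ref{main cor} gives $\mathcal{L}^{\star}_{\widehat{\mathcal{A}}, \mathbf{k}'}(\{1\}^{l'_1-1}, 1-t_1, \ldots, \{1\}^{l'_r-1}, 1-t_r) = \mathcal{L}^{\star}_{\widehat{\mathcal{A}}, \{1\}^{w}}(\mathbf{u}')$, where $\mathbf{u}'$ is built analogously from the words $w(\mathbf{k}_\alpha^{\vee})$ with trailing $1$ replaced by $1-t_\alpha$.

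Next, Theorem \ref{main thm} applied to $\mathcal{L}^{\star}_{\widehat{\mathcal{A}}, \{1\}^{w}}(\mathbf{u})$ yields equality with $\mathcal{L}^{\star}_{\widehat{\mathcal{A}}, \{1\}^{w}}(1-\mathbf{u})$, so the whole proof reduces to the combinatorial check $1-\mathbf{u} = \mathbf{u}'$. For each $\alpha$, the involution $x \mapsto 1-x$ swaps $0 \leftrightarrow 1$ and $t_\alpha \leftrightarrow 1-t_\alpha$; applied to the $\mathbf{k}_\alpha$-block of $\mathbf{u}$ it therefore produces the $\tau$-flipped word $\tau(w(\mathbf{k}_\alpha))$ with its trailing $0$ replaced by $1-t_\alpha$. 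On the other hand, the defining relation $w(\mathbf{k}_\alpha^{\vee}) = \tau(w(\mathbf{k}_\alpha) 1^{-1})\,1$ says that $w(\mathbf{k}_\alpha^{\vee})$ coincides with $\tau(w(\mathbf{k}_\alpha))$ except in the last letter, which is $1$ rather than $0$, and in the construction of $\mathbf{u}'$ this is precisely the letter that gets overwritten by $1-t_\alpha$. The two tuples therefore agree letter by letter.

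The genuine depth of the paper sits in Theorem \ref{main thm}; granted that result, the above deduction is essentially bookkeeping, and the only step requiring real attention is the identification of $1-\mathbf{u}$ with the Hoffman-dual expansion $\mathbf{u}'$, which hinges on the somewhat asymmetric formula $w(\mathbf{k}^{\vee}) = \tau(w(\mathbf{k}) 1^{-1})\,1$ — the trailing $1$ of every word being the reason the substitution $t_\alpha \mapsto 1-t_\alpha$ lines up correctly with the Hoffman dual.
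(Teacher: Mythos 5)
Your proposal is correct and follows essentially the same route as the paper: reduce to the all-ones index via Lemma \ref{general index lemma}, apply Theorem \ref{main thm} to the expanded tuple, and recognize the result as the expansion attached to the blockwise Hoffman duals. Your explicit check that $1-\mathbf{u}=\mathbf{u}'$ via the relation $w(\mathbf{k}^{\vee})=\tau(w(\mathbf{k})1^{-1})1$ is precisely the step the paper compresses into its citation of Definition \ref{Hoffman dual}.
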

	\begin{proof}
		We denote $\mathbf{k}_i$ and $\mathbf{k}^{\vee}_i$ as $(k_1^{(i)}, \dots, k_{l_i}^{(i)})$ and $({k'}_1^{(i)}, \dots, {k'}_{l'_i}^{(i)})$ respectively for $i=1, \dots, r$. Let $w:=\mathrm{wt}(\mathbf{k})$. Then, by Lemma \ref{general index lemma}, Theorem \ref{main thm}, and Definition \ref{Hoffman dual}, we have
		\begin{equation*}
		\begin{split}
		& \hspace{5.5mm} \mathcal{L}_{\widehat{\mathcal{A}}, \mathbf{k}}^{\star}(\{1\}^{l_1-1}, t_1, \dots, \{1\}^{l_r-1}, t_r) \\&= \mathcal{L}_{\widehat{\mathcal{A}}, \{1\}^w}^{\star}(\dots, \{0\}^{k_1^{(i)}-1}, 1, \dots, \{0\}^{k_{l_i-1}^{(i)}-1}, 1, \{0\}^{k_{l_i}^{(i)}-1}, t_i, \dots ) \\ &= \mathcal{L}_{\widehat{\mathcal{A}}, \{1\}^w}^{\star}(\dots, \{1\}^{k_1^{(i)}-1}, 0, \dots, \{1\}^{k_{l_i-1}^{(i)}-1}, 0, \{1\}^{k_{l_i}^{(i)}-1}, 1-t_i, \dots )\\ &= \mathcal{L}_{\widehat{\mathcal{A}}, \{1\}^w}^{\star}(\dots, \{0\}^{{k'}_1^{(i)}-1}, 1, \dots, \{0\}^{{k'}_{l'_i-1}^{(i)}-1}, 1, \{0\}^{{k'}_{l'_i}^{(i)}-1}, 1-t_i, \dots ) \\ &= \mathcal{L}_{\widehat{\mathcal{A}}, \mathbf{k}'}^{\star}(\{1\}^{l'_1-1}, 1-t_1, \dots, \{1\}^{l'_r-1}, 1-t_r).
		\end{split}
		\end{equation*}
	\end{proof}
	By considering the case $r=1$, we obtain Theorem \ref{dual fn eq for 1-var}. Theorem \ref{p-adic duality} is obtained by the substitution $t=1$ in the equality (\ref{1-var}).
	\section{Proof of theorem \ref{main thm}}
	\label{sec:Proof of main result}
	\begin{lemma}
		Let $p$ be a prime number and $n$ a positive integer satisfying $n < p$. Then, we have the following $p$-adic expansion$:$
		\[
		(-1)^n\binom{p-1}{n} = (-1)^{p-1}\left( 1-\frac{p}{n} \right) \sum_{i=0}^{\infty}\sum_{p-1 \geq m_1 \geq \cdots \geq m_i \geq n}\frac{p^i}{m_1 \cdots m_i}.
		\]
		\label{lemma by the referee}\end{lemma}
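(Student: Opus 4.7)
The plan is to express both sides of the lemma as products of the form $\prod(1 - p/k)^{\pm 1}$ in $\mathbb{Z}_p$ and reduce everything to the elementary identity $\prod_{k=1}^{p-1}(1 - p/k) = (-1)^{p-1}$.

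First, I would rewrite the left-hand side by expanding the binomial coefficient and absorbing the sign into each linear factor:
\[
(-1)^n \binom{p-1}{n} = \frac{(-1)^n (p-1)(p-2)\cdots(p-n)}{n!} = \prod_{k=1}^{n}\frac{k-p}{k} = \prod_{k=1}^{n}\left(1-\frac{p}{k}\right).
\]
This is a finite product of units in $\mathbb{Z}_p$ congruent to $1$ modulo $p$, so no convergence issues arise.

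Next, I would recognize the double sum on the right-hand side as the $p$-adic expansion of a finite product. Since $v_p(p/m) = 1$ for $1 \le m \le p-1$, each factor $(1 - p/m)^{-1} = \sum_{a \ge 0}(p/m)^a$ converges in $\mathbb{Z}_p$; expanding $\prod_{m=n}^{p-1}(1 - p/m)^{-1}$ and indexing the multi-exponents $(a_n, \ldots, a_{p-1})$ with $\sum_m a_m = i$ by the associated weakly decreasing sequences $p-1 \ge m_1 \ge \cdots \ge m_i \ge n$ yields
\[
\sum_{i=0}^{\infty}\sum_{p-1 \ge m_1 \ge \cdots \ge m_i \ge n}\frac{p^i}{m_1 \cdots m_i} = \prod_{m=n}^{p-1}\frac{1}{1-p/m}.
\]
Multiplying by the prefactor $(1 - p/n)$ cancels the $m = n$ factor, so the right-hand side of the lemma equals $(-1)^{p-1}\prod_{m=n+1}^{p-1}(1-p/m)^{-1}$.

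Putting the two reformulations together, the claim becomes $\prod_{k=1}^{p-1}\left(1-p/k\right) = (-1)^{p-1}$, which I would verify directly via the substitution $j = p - k$ in the numerator:
\[
\prod_{k=1}^{p-1}\frac{k-p}{k} = \frac{\prod_{j=1}^{p-1}(-j)}{(p-1)!} = (-1)^{p-1}.
\]
I do not anticipate any real obstacle; the argument is entirely algebraic. The only bookkeeping point is the bijection between multisets of $\{n, \ldots, p-1\}$ of size $i$ and the weakly decreasing sequences $m_1 \ge \cdots \ge m_i$ with entries in that set, which is standard.
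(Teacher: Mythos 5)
Your proof is correct and follows essentially the same route as the paper: both reduce the lemma to the product $\prod_{m}\left(1-\frac{p}{m}\right)^{\pm 1}$ and obtain the double sum by expanding each factor $\left(1-\frac{p}{m}\right)^{-1}$ as a geometric series. The only cosmetic difference is that you write $(-1)^n\binom{p-1}{n}=\prod_{k=1}^{n}\left(1-\frac{p}{k}\right)$ and invoke $\prod_{k=1}^{p-1}\left(1-\frac{p}{k}\right)=(-1)^{p-1}$ to pass to the inverse product over $n\le m\le p-1$, whereas the paper reaches the same expression directly via the symmetry and absorption identities for binomial coefficients.
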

	\begin{proof}
		We can calculate as follows:
		\begin{equation*}
		\begin{split}
		(-1)^n\binom{p-1}{n} &= (-1)^n\binom{p-1}{p-1-n} = (-1)^n\frac{p-n}{n}\binom{p-1}{p-n} \\
		&= (-1)^{p-1}\left( 1-\frac{p}{n}\right) \prod_{m=n}^{p-1}\left( 1-\frac{p}{m} \right)^{-1} \\
		&= (-1)^{p-1}\left( 1-\frac{p}{n}\right) \prod_{m=n}^{p-1}\left( 1+\frac{p}{m}+\frac{p^2}{m^2}+\cdots \right) \\
		&= (-1)^{p-1}\left( 1-\frac{p}{n}\right) \sum_{i=0}^{\infty}\sum_{p-1 \geq m_1 \geq \cdots \geq m_i \geq n}\frac{p^i}{m_1 \cdots m_i}.
		\end{split}
		\end{equation*}
		This completes the proof of the lemma.
	\end{proof}
	\begin{proposition}
		Let $p$ be an odd prime number and $\mathbf{t} = (t_1, \dots, t_w)$ a tuple of indeterminates. Then, we have the following $p$-adic expansion$:$
		\begin{equation*}
		\begin{split} 
		&\sum_{p-1\geq n_1\geq \cdots \geq n_w\geq 1}(-1)^{n_1}\binom{p-1}{n_1}\frac{t_1^{n_1-n_2}\cdots t_{w-1}^{n_{w-1}-n_w}t_w^{n_w}}{n_1\cdots n_w}\\ &=\text{\rm \pounds}_{p-1, \{1\}^w}^{\scalebox{0.7}{\text{\cyr sh}}, \star}(\mathbf{t})+\sum_{i=1}^{\infty}\left( \text{\rm \pounds}_{p-1, \{1\}^{w+i}}^{\scalebox{0.7}{\text{\cyr sh}}, \star}(\{1\}^i, \mathbf{t})-\text{\rm \pounds}_{p-1, (\{1\}^{i-1}, 2, \{1\}^{w-1})}^{\scalebox{0.7}{\text{\cyr sh}}, \star}(\{1\}^{i-1}, \mathbf{t})\right)p^i.
		\end{split}
		\end{equation*}
		\label{key proposition}\end{proposition}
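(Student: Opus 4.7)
The plan is to substitute Lemma~\ref{lemma by the referee} directly into the left-hand side and then reorganize the resulting nested sum into two families of finite shuffle star-multiple polylogarithms. Since $p$ is odd, $(-1)^{p-1}=1$, so the lemma gives
\[
(-1)^{n_1}\binom{p-1}{n_1}=\left(1-\frac{p}{n_1}\right)\sum_{i=0}^{\infty}\sum_{p-1\ge m_1\ge\cdots\ge m_i\ge n_1}\frac{p^i}{m_1\cdots m_i}.
\]
Inserting this into the LHS splits it as $S_1-S_2$, where $S_1$ corresponds to the $1$-term and $S_2$ to the $p/n_1$-term.

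For $S_1$, after combining the outer summation $p-1\ge n_1\ge\cdots\ge n_w\ge 1$ with the inner summation $p-1\ge m_1\ge\cdots\ge m_i\ge n_1$, one gets a single monotone chain $p-1\ge m_1\ge\cdots\ge m_i\ge n_1\ge\cdots\ge n_w\ge 1$ with weight $p^i$, denominator $m_1\cdots m_i\cdot n_1\cdots n_w$, and numerator $t_1^{n_1-n_2}\cdots t_{w-1}^{n_{w-1}-n_w}t_w^{n_w}$. Recognizing that extending the argument by $i$ leading $1$'s contributes factors $1^{m_j-m_{j+1}}$ (trivial) and $1^{m_i-n_1}$ (trivial), this is exactly $\text{\pounds}_{p-1,\{1\}^{w+i}}^{\scalebox{0.7}{\text{\cyr sh}},\star}(\{1\}^i,\mathbf{t})$. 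Thus
\[
S_1=\sum_{i=0}^{\infty}\text{\pounds}_{p-1,\{1\}^{w+i}}^{\scalebox{0.7}{\text{\cyr sh}},\star}(\{1\}^i,\mathbf{t})\,p^i,
\]
whose $i=0$ term produces the standalone $\text{\pounds}_{p-1,\{1\}^w}^{\scalebox{0.7}{\text{\cyr sh}},\star}(\mathbf{t})$ on the RHS.

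For $S_2$, the extra factor $1/n_1$ promotes the exponent of $n_1$ in the denominator to $2$, so the same chain-combining procedure instead yields the shuffle star-polylog with index $(\{1\}^i,2,\{1\}^{w-1})$ and argument $(\{1\}^i,\mathbf{t})$, with an overall weight $p^{i+1}$. Reindexing $i\mapsto i-1$ turns
\[
S_2=\sum_{i=0}^{\infty}\text{\pounds}_{p-1,(\{1\}^i,2,\{1\}^{w-1})}^{\scalebox{0.7}{\text{\cyr sh}},\star}(\{1\}^i,\mathbf{t})\,p^{i+1}
\]
into $\sum_{i=1}^{\infty}\text{\pounds}_{p-1,(\{1\}^{i-1},2,\{1\}^{w-1})}^{\scalebox{0.7}{\text{\cyr sh}},\star}(\{1\}^{i-1},\mathbf{t})\,p^i$, which is exactly the subtracted family on the RHS. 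Combining $S_1-S_2$ completes the identification.

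The argument is essentially a careful bookkeeping: the real work is in Lemma~\ref{lemma by the referee} (which supplies the convenient expansion with its distinguished $1-p/n_1$ prefactor), and the only potential pitfall is ensuring that the concatenated chain $m_1\ge\cdots\ge m_i\ge n_1\ge\cdots\ge n_w$ and the trivial $1$-powers in $(\{1\}^i,\mathbf{t})$ line up so that the resulting expression is literally the claimed shuffle star-polylog. Once this bijective reading of the index and the argument tuple is verified against the definition of $\text{\pounds}^{\scalebox{0.7}{\text{\cyr sh}},\star}$, there is nothing more to prove.
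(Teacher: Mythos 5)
Your proposal is correct and follows essentially the same route as the paper: substitute Lemma~\ref{lemma by the referee} (using $(-1)^{p-1}=1$ for odd $p$), merge the two monotone chains into $p-1\ge m_1\ge\cdots\ge m_i\ge n_1\ge\cdots\ge n_w\ge 1$, and read off the two families of shuffle star-polylogarithms, with the $p/n_1$ term raising the exponent at $n_1$ to $2$ and shifting the power of $p$ by one. The bookkeeping of indices and argument tuples matches the paper's computation exactly.
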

	\begin{proof}
		By Lemma \ref{lemma by the referee}, we have
		\begin{equation*}
		\begin{split} 
		&\sum_{p-1\geq n_1\geq \cdots \geq n_w\geq 1}(-1)^{n_1}\binom{p-1}{n_1}\frac{t_1^{n_1-n_2}\cdots t_{w-1}^{n_{w-1}-n_w}t_w^{n_w}}{n_1\cdots n_w}\\ 
		&= \sum_{p-1\geq n_1\geq \cdots \geq n_w\geq 1}\frac{t_1^{n_1-n_2}\cdots t_{w-1}^{n_{w-1}-n_w}t_w^{n_w}}{n_1\cdots n_w}\left( 1-\frac{p}{n_1} \right) \sum_{i=0}^{\infty}\sum_{p-1 \geq m_1 \geq \cdots \geq m_i \geq n_1}\frac{p^i}{m_1 \cdots m_i} \\
		&= \sum_{i=0}^{\infty}\sum_{p-1 \geq m_1 \geq \cdots \geq m_i \geq n_1 \geq \cdots n_w \geq 1}\left( 1-\frac{p}{n_1} \right) \frac{t_1^{n_1-n_2}\cdots t_{w-1}^{n_{w-1}-n_w}t_w^{n_w}}{m_1\cdots m_in_1\cdots n_w}p^i \\ 
		&= \text{\rm \pounds}_{p-1, \{1\}^w}^{\scalebox{0.7}{\text{\cyr sh}}, \star}(\mathbf{t})+\sum_{i=1}^{\infty}\left( \text{\rm \pounds}_{p-1, \{1\}^{w+i}}^{\scalebox{0.7}{\text{\cyr sh}}, \star}(\{1\}^i, \mathbf{t})-\text{\rm \pounds}_{p-1, (\{1\}^{i-1}, 2, \{1\}^{w-1})}^{\scalebox{0.7}{\text{\cyr sh}}, \star}(\{1\}^{i-1}, \mathbf{t})\right)p^i.
		\end{split}
		\end{equation*}
		This completes the proof of the proposition.
	\end{proof}
	\begin{proposition}
		Let $N$ and $w$ be positive integers. Then, the following polynomial identity holds in $\mathbb{Q}[t_1, \dots, t_w]:$
		\begin{equation*}
		\begin{split}
		&\sum_{N\geq n_1 \geq \cdots \geq n_w \geq 1}(-1)^{n_1}\binom{N}{n_1}\frac{t_1^{n_1-n_2}\cdots t_{w-1}^{n_{w-1}-n_w}(t_w^{n_w}-\frac{1}{2})}{n_1 \cdots n_w}\\
		&=\sum_{N\geq n_1 \geq \cdots \geq n_w \geq 1}\frac{(1-t_1)^{n_1-n_2} \cdots (1-t_{w-1})^{n_{w-1}-n_{w}} \{ (1-t_w)^{n_w}-\frac{1}{2} \}}{n_1\cdots n_w}.
		\end{split}
		\label{modified maltivar identity}\end{equation*}
		\label{symmetric identity}\end{proposition}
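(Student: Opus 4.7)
The plan is to prove the identity by a joint induction on $w$, simultaneously establishing Proposition \ref{symmetric identity} and the following auxiliary ``weight-only'' identity, which I shall call (AUX$_w$):
\[
\sum_{N \geq n_1 \geq \cdots \geq n_w \geq 1}(-1)^{n_1}\binom{N}{n_1}\frac{t_1^{n_1-n_2}\cdots t_{w-1}^{n_{w-1}-n_w}}{n_1 \cdots n_w} = -\sum_{N \geq n_1 \geq \cdots \geq n_w \geq 1}\frac{(1-t_1)^{n_1-n_2}\cdots (1-t_{w-1})^{n_{w-1}-n_w}}{n_1 \cdots n_w}
\]
(where the $t$-product is empty when $w=1$). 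Notice that (AUX$_w$) is precisely what one needs in order to match the $t_w=1$ specializations of the two sides of Proposition \ref{symmetric identity}, where the innermost factor becomes $\pm\tfrac{1}{2}$. The base case $w=1$ is elementary: (AUX$_1$) is the classical $\sum_{n=1}^{N}(-1)^n\binom{N}{n}/n = -H_N$, and for the proposition at $w=1$ both sides have the same $t$-derivative $((1-t)^N - 1)/t$, so (AUX$_1$) fixes the constant of integration.

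For the inductive step, I assume Prop$_{w-1}$ and (AUX$_{w-1}$) and first derive (AUX$_w$). Using $1/n_w = \int_0^1 u^{n_w-1}\,du$ together with the geometric sum $\sum_{n_w=1}^{n_{w-1}} t_{w-1}^{n_{w-1}-n_w}u^{n_w-1} = (u^{n_{w-1}} - t_{w-1}^{n_{w-1}})/(u-t_{w-1})$, I rewrite the left-hand side of (AUX$_w$) as $\int_0^1 (X_{w-1}(u) - X_{w-1}(t_{w-1}))/(u - t_{w-1})\,du$, where $X_{w-1}(s)$ denotes the depth-$(w-1)$ sum whose last numerator factor is $s^{n_{w-1}}$ and which carries the alternating binomial weight $(-1)^{n_1}\binom{N}{n_1}$. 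Similarly the right-hand side becomes $-\int_0^1 (Y_{w-1}(u) - Y_{w-1}(1 - t_{w-1}))/(u - (1-t_{w-1}))\,du$, with $Y_{w-1}$ the analogous sum carrying no binomial weight but with each $t_i$ replaced by $1-t_i$. Combining Prop$_{w-1}$ with (AUX$_{w-1}$) yields the key relation $X_{w-1}(s) = Y_{w-1}(1-s) + A_{w-1}$, where $A_{w-1}$ is the common value of both sides of (AUX$_{w-1}$); substituting this into the left-hand integral and then applying the change of variables $u \mapsto 1-u$ converts it precisely into the right-hand integral.

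Finally I would derive Prop$_w$ by differentiating both sides with respect to $t_w$ and carrying out the same geometric summation over $n_w$. The resulting equation is equivalent to
\[
L_{w-1}(t_1,\dots,t_{w-1}) - L_{w-1}(t_1,\dots,t_{w-2},t_w) = R_{w-1}(t_1,\dots,t_{w-1}) - R_{w-1}(t_1,\dots,t_{w-2},t_w),
\]
where $L_{w-1}$ and $R_{w-1}$ denote the two sides of Prop$_{w-1}$; this is immediate from the inductive hypothesis, so $L_w - R_w$ is independent of $t_w$. Evaluating at $t_w = 1$ yields $\tfrac{1}{2}(A_w + A'_w)$, which vanishes by the just-established (AUX$_w$). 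The main obstacle is the algebraic bookkeeping in the derivation of (AUX$_w$), since the key relation $X_{w-1}(s) = Y_{w-1}(1-s) + A_{w-1}$ requires combining Prop$_{w-1}$ and (AUX$_{w-1}$) in a precise, coordinated way; once this is established, the rest reduces to a routine change of variables in an elementary integral.
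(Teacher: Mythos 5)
Your argument is correct, but it takes a genuinely different route from the paper. The paper's proof of this proposition is essentially two lines: it quotes \cite[Theorem 2.5]{SS}, namely
$\sum_{N\geq n_1\geq\cdots\geq n_w\geq 1}(-1)^{n_1}\binom{N}{n_1}\frac{t_1^{n_1-n_2}\cdots t_{w-1}^{n_{w-1}-n_w}t_w^{n_w}}{n_1\cdots n_w}=\sum_{N\geq n_1\geq\cdots\geq n_w\geq 1}\frac{(1-t_1)^{n_1-n_2}\cdots\{(1-t_w)^{n_w}-1\}}{n_1\cdots n_w}$,
specializes it at $t_w=1$ to obtain exactly your identity $(\mathrm{AUX}_w)$, and then subtracts one half of that specialization from the quoted identity to produce the symmetric form. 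So your $(\mathrm{AUX}_w)$ is the same stepping stone the paper uses, but where the paper imports the main input from the earlier work, you prove everything from scratch: the joint induction on $w$, with the representation $1/n_w=\int_0^1 u^{n_w-1}\,du$, the geometric summation producing the difference quotients $\frac{X_{w-1}(u)-X_{w-1}(t_{w-1})}{u-t_{w-1}}$, the key relation $X_{w-1}(s)=Y_{w-1}(1-s)+A_{w-1}$ obtained by combining the depth-$(w-1)$ proposition with $(\mathrm{AUX}_{w-1})$, and the substitution $u\mapsto 1-u$. I checked the details and they hold: the base case is right (both sides at $w=1$ have $t$-derivative $((1-t)^N-1)/t$ and agree at $t=1$ by $(\mathrm{AUX}_1)$), the difference quotients and the formal integration are legitimate operations on polynomials over $\mathbb{Q}$, and the final evaluation at $t_w=1$ reduces to $(\mathrm{AUX}_w)$ exactly as you say. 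Note that the depth-$(w-1)$ proposition together with $(\mathrm{AUX}_{w-1})$ is equivalent to the cited identity at depth $w-1$, so in effect you have supplied a self-contained proof of \cite[Theorem 2.5]{SS} along the way. The paper's approach buys brevity at the cost of an external reference; yours buys self-containedness and makes transparent that the $t_w=1$ specialization plays the role of the constant of integration at each depth.
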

	\begin{proof}
		By \cite[Theorem 2.5]{SS}, we have
		\begin{equation*}
		\begin{split}
		&\sum_{N\geq n_1 \geq \cdots \geq n_w \geq 1}(-1)^{n_1}\binom{N}{n_1}\frac{t_1^{n_1-n_2}\cdots t_{w-1}^{n_{w-1}-n_w}t_w^{n_w}}{n_1 \cdots n_w}\\
		&=\sum_{N\geq n_1 \geq \cdots \geq n_w \geq 1}\frac{(1-t_1)^{n_1-n_2} \cdots (1-t_{w-1})^{n_{w-1}-n_{w}} \{ (1-t_w)^{n_w}-1 \}}{n_1\cdots n_w},
		\end{split}
		\end{equation*}
		and by the substitution $t_w=1$, we have
		\begin{equation*}
		\begin{split}
		&\sum_{N\geq n_1 \geq \cdots \geq n_w \geq 1}(-1)^{n_1}\binom{N}{n_1}\frac{t_1^{n_1-n_2}\cdots t_{w-1}^{n_{w-1}-n_w}}{n_1 \cdots n_w}\\ &=  -\sum_{N\geq n_1 \geq \cdots \geq n_w \geq 1}\frac{(1-t_1)^{n_1-n_2} \cdots (1-t_{w-1})^{n_{w-1}-n_{w}}}{n_1\cdots n_w}.
		\end{split}
		\end{equation*}
		By combining these two identities, we obtain the desired identity.
	\end{proof}
	In order to prove Theorem \ref{main thm}, it is sufficient to show the following theorem:
	\begin{theorem}
		Let $n$ and $w$ be positive integers and $\mathbf{t}$ a tuple of $w$ indeterminates. We define $\mathcal{L}_{\mathcal{A}_n, \{1\}^w}^{\star}(\mathbf{t})$ to be
		\begin{equation*}
		\mathcal{L}_{\mathcal{A}_n, \{1\}^w}^{\star}(\mathbf{t}) := \sum_{i=0}^{n-1}\left( \text{\rm \pounds}_{\mathcal{A}_n, \{1\}^{w+i}}^{\scalebox{0.7}{\text{\cyr sh}}, \star}(\{1\}^i, \mathbf{t})-\frac{1}{2}\text{\rm \pounds}_{\mathcal{A}_n, \{1\}^{w+i}}^{\scalebox{0.7}{\text{\cyr sh}}, \star}(\{1\}^i, \mathbf{t}_1)\right)\boldsymbol{p}^i.
		\end{equation*}
		Then, we have 
		\begin{equation}
		\mathcal{L}_{\mathcal{A}_n, \{1\}^w}^{\star}(\mathbf{t}) = \mathcal{L}_{\mathcal{A}_n, \{1\}^w}^{\star}(1-\mathbf{t})
		\label{A_n relation}\end{equation}
		in $\mathcal{A}_{n, \mathbb{Z}[\mathbf{t}]}$.
	\end{theorem}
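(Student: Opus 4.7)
The plan is to induct on $n$ with the following identity as engine: for all positive integers $n, w$ and any $w$-tuple $\mathbf{t}$ of indeterminates,
\[
\mathcal{L}_{\mathcal{A}_n, \{1\}^w}^{\star}(\mathbf{t}) = \text{\rm \pounds}_{\mathcal{A}_n, \{1\}^w}^{\scalebox{0.7}{\text{\cyr sh}}, \star}(1-\mathbf{t}) - \frac{1}{2}\text{\rm \pounds}_{\mathcal{A}_n, \{1\}^w}^{\scalebox{0.7}{\text{\cyr sh}}, \star}((1-\mathbf{t})_1) + \mathbf{p} \cdot \mathcal{L}_{\mathcal{A}_n, \{1\}^{w+1}}^{\star}(0, \mathbf{t})
\]
in $\mathcal{A}_{n, \mathbb{Z}[\mathbf{t}]}$; call this $(\ast)$. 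To prove $(\ast)$, I would work in $\widehat{\mathbb{Z}[\mathbf{t}]}_p$ for each odd prime $p$. Proposition \ref{key proposition} applied to $\mathbf{t}$ and to $\mathbf{t}_1$, then combined as ``first minus one-half of second'', has left-hand side exactly the combination appearing in Proposition \ref{symmetric identity} with $N = p-1$; the latter equates it to $\text{\rm \pounds}_{p-1, \{1\}^w}^{\scalebox{0.7}{\text{\cyr sh}}, \star}(1-\mathbf{t}) - \frac{1}{2}\text{\rm \pounds}_{p-1, \{1\}^w}^{\scalebox{0.7}{\text{\cyr sh}}, \star}((1-\mathbf{t})_1)$. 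The right-hand side of the $p$-adic expansion splits into a $\{1\}^{w+i}$-indexed part that assembles to $\mathcal{L}_{\widehat{\mathcal{A}}, \{1\}^w}^{\star}(\mathbf{t})$, and a $(\{1\}^{i-1}, 2, \{1\}^{w-1})$-indexed correction. Lemma \ref{general index lemma} rewrites each correction term as $\text{\rm \pounds}_{p-1, \{1\}^{w+i}}^{\scalebox{0.7}{\text{\cyr sh}}, \star}(\{1\}^{i-1}, 0, t_1, \dots, t_w)$, and reindexing $i \mapsto j = i - 1$, together with $\mathbf{p}^n = 0$ in $\mathcal{A}_n$ killing the would-be $i = n$ contribution, shows that the corrections collectively assemble into $\mathbf{p} \cdot \mathcal{L}_{\mathcal{A}_n, \{1\}^{w+1}}^{\star}(0, \mathbf{t})$. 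Rearranging gives $(\ast)$.

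With $(\ast)$ the induction closes. For $n = 1$, we have $\mathbf{p} = 0$ in $\mathcal{A}_{1, \mathbb{Z}[\mathbf{t}]}$, so the last summand of $(\ast)$ vanishes and the right-hand side is simply the $i = 0$ summand of $\mathcal{L}_{\mathcal{A}_1, \{1\}^w}^{\star}(1-\mathbf{t})$, which in $\mathcal{A}_1$ equals $\mathcal{L}_{\mathcal{A}_1, \{1\}^w}^{\star}(1-\mathbf{t})$ itself. For $n > 1$, assume (\ref{A_n relation}) in $\mathcal{A}_{n-1, \mathbb{Z}[\mathbf{t}]}$ at every weight. Applying the induction hypothesis at weight $w + 1$ to $\mathbf{s} := (0, t_1, \dots, t_w)$, and noting $1 - \mathbf{s} = (1, 1-\mathbf{t})$, yields $\mathcal{L}_{\mathcal{A}_{n-1}, \{1\}^{w+1}}^{\star}(0, \mathbf{t}) = \mathcal{L}_{\mathcal{A}_{n-1}, \{1\}^{w+1}}^{\star}(1, 1-\mathbf{t})$. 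The kernel of $\pi_{n-1} \colon \mathcal{A}_n \twoheadrightarrow \mathcal{A}_{n-1}$ equals $\mathbf{p}^{n-1}\mathcal{A}_n$ and is annihilated by $\mathbf{p}$, so multiplying the lift of this equality by $\mathbf{p}$ gives a genuine identity in $\mathcal{A}_n$: $\mathbf{p} \cdot \mathcal{L}_{\mathcal{A}_n, \{1\}^{w+1}}^{\star}(0, \mathbf{t}) = \mathbf{p} \cdot \mathcal{L}_{\mathcal{A}_n, \{1\}^{w+1}}^{\star}(1, 1-\mathbf{t})$. Expanding the right-hand side directly from the definition, absorbing the leading $1$ into the $\{1\}^j$ prefix and reindexing $j \mapsto i = j + 1$ (the $i = n$ term dropping by $\mathbf{p}^n = 0$), one identifies it with the $i \geq 1$ tail of $\mathcal{L}_{\mathcal{A}_n, \{1\}^w}^{\star}(1-\mathbf{t})$. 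Substituting back into $(\ast)$, the $i = 0$ and $i \geq 1$ parts of $\mathcal{L}_{\mathcal{A}_n, \{1\}^w}^{\star}(1-\mathbf{t})$ reassemble, producing (\ref{A_n relation}).

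The main obstacle is $(\ast)$: while Propositions \ref{key proposition} and \ref{symmetric identity} and Lemma \ref{general index lemma} each provide a single step, their combination must produce precisely the correction $\mathbf{p} \cdot \mathcal{L}_{\mathcal{A}_n, \{1\}^{w+1}}^{\star}(0, \mathbf{t})$, not merely some element of $\mathbf{p} \cdot \mathcal{A}_{n, \mathbb{Z}[\mathbf{t}]}$. The delicate point is the precise match between the $(\{1\}^{i-1}, 2, \{1\}^{w-1})$-indexed terms of Proposition \ref{key proposition} (after flattening by Lemma \ref{general index lemma} and reindexing) and the explicit $\mathcal{L}^{\star}$ at weight $w + 1$. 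Once this bookkeeping is secured, the reduction of the weight-$w$ statement in $\mathcal{A}_n$ to the weight-$(w + 1)$ statement in $\mathcal{A}_{n-1}$ is formal and the induction goes through.
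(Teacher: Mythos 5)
Your proposal is correct and follows essentially the same route as the paper: the identity $(\ast)$ is exactly the paper's intermediate equality obtained by combining Proposition \ref{key proposition} with Proposition \ref{symmetric identity} and rewriting the $(\{1\}^{i-1},2,\{1\}^{w-1})$-correction via Lemma \ref{general index lemma} as a depth-$(w+1)$ series with one variable specialized to $0$ (the paper's $t_0=0$), and the induction on $n$ closes in the same way, by applying the hypothesis at weight $w+1$ in $\mathcal{A}_{n-1}$ and absorbing $\mathbf{p}\cdot\mathcal{L}^{\star}_{\mathcal{A}_n,\{1\}^{w+1}}(1,1-\mathbf{t})$ into the $i\geq 1$ tail. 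The bookkeeping you flag as the main obstacle does go through exactly as you describe.
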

	\begin{proof}
		We prove the equality (\ref{A_n relation}) by induction on $n$. By combining Proposition \ref{key proposition} with Proposition \ref{symmetric identity}, we have
		\begin{equation}
		\begin{split}
		&\text{\rm \pounds}_{\widehat{\mathcal{A}}, \{1\}^w}^{\scalebox{0.7}{\text{\cyr sh}}, \star}(\mathbf{t})-\frac{1}{2}\text{\rm \pounds}_{\widehat{\mathcal{A}}, \{1\}^w}^{\scalebox{0.7}{\text{\cyr sh}}, \star}(\mathbf{t}_1)
		+\sum_{i=1}^{\infty}\left\{ \left( \text{\rm \pounds}_{\widehat{\mathcal{A}}, \{1\}^{w+i}}^{\scalebox{0.7}{\text{\cyr sh}}, \star}(\{1\}^i, \mathbf{t})-\frac{1}{2}\text{\rm \pounds}_{\widehat{\mathcal{A}}, \{1\}^{w+i}}^{\scalebox{0.7}{\text{\cyr sh}}, \star}(\{1\}^{i}, \mathbf{t}_1) \right) \right. \\
		&\hspace{21mm} - \left. \left( \text{\rm \pounds}_{\widehat{\mathcal{A}}, (\{1\}^{i-1}, 2, \{1\}^{w-1})}^{\scalebox{0.7}{\text{\cyr sh}}, \star}(\{1\}^{i-1}, \mathbf{t})-\frac{1}{2}\text{\rm \pounds}_{\widehat{\mathcal{A}}, (\{1\}^{i-1}, 2, \{1\}^{w-1})}^{\scalebox{0.7}{\text{\cyr sh}}, \star}(\{1\}^{i-1}, \mathbf{t}_1)\right) \right\}\boldsymbol{p}^i\\
		&=\text{\rm \pounds}_{\widehat{\mathcal{A}}, \{1\}^w}^{\scalebox{0.7}{\text{\cyr sh}}, \star}(1-\mathbf{t})-\frac{1}{2}\text{\rm \pounds}_{\widehat{\mathcal{A}}, \{1\}^w}^{\scalebox{0.7}{\text{\cyr sh}}, \star}((1-\mathbf{t})_1).
		\end{split}
		\label{last}\end{equation}
		We see that the equality (\ref{A_n relation}) for $n=1$ holds by the projection $\pi_1 \colon \widehat{\mathcal{A}}_{\mathbb{Z}[\mathbf{t}]} \twoheadrightarrow  \mathcal{A}_{\mathbb{Z}[\mathbf{t}]}$. We assume that the equation (\ref{A_n relation}) for $n-1$ holds for any tuple of indeterminates.
		By the equality (\ref{last}) and the projection $\pi_n \colon \widehat{\mathcal{A}}_{\mathbb{Z}[\mathbf{t}]} \twoheadrightarrow  \mathcal{A}_{n, \mathbb{Z}[\mathbf{t}]}$, we have
		\begin{equation}
		\begin{split}
		\mathcal{L}_{\mathcal{A}_n, \{1\}^w}^{\star}(\mathbf{t})
		&= \text{\rm \pounds}_{\mathcal{A}_n, \{1\}^w}^{\scalebox{0.7}{\text{\cyr sh}}, \star}(1-\mathbf{t})-\frac{1}{2}\text{\rm \pounds}_{\mathcal{A}_n, \{1\}^w}^{\scalebox{0.7}{\text{\cyr sh}}, \star}((1-\mathbf{t})_1)\\
		&\ \left.+\sum_{i=1}^{n-1}\left( \text{\rm \pounds}_{\mathcal{A}_n, \{1\}^{w+i}}^{\scalebox{0.7}{\text{\cyr sh}}, \star}(\{1\}^{i-1}, t_0, \mathbf{t})-\frac{1}{2} \text{\rm \pounds}_{\mathcal{A}_n, \{1\}^{w+i}}^{\scalebox{0.7}{\text{\cyr sh}}, \star}(\{1\}^{i-1}, t_0, \mathbf{t}_1) \right)\boldsymbol{p}^i\right|_{t_0=0}.
		\end{split}
		\label{t_0=0}\end{equation}
		On the other hand, by the induction hypothesis, we have
		\[
		\mathcal{L}_{\mathcal{A}_{n-1}, \{1\}^{w+1}}^{\star}(t_0, \mathbf{t}) = \mathcal{L}_{\mathcal{A}_{n-1}, \{1\}^{w+1}}^{\star}(1-t_0, 1-\mathbf{t}).
		\] 
		Therefore, the right-hand side of (\ref{t_0=0}) coincides with $\mathcal{L}_{\mathcal{A}_n, \{1\}^w}^{\star}(1-\mathbf{t})$ and the equality (\ref{A_n relation}) for $n$ holds. Here, note that there exists the canonical isomorphism $\mathcal{A}_{n-1, \mathbb{Z}[\mathbf{t}]} \simeq \boldsymbol{p}\mathcal{A}_{n, \mathbb{Z}[\mathbf{t}]}$.
	\end{proof}

\end{document}